\newcommand{\re}{\mathbb{R}}
\newcommand{\N}{\mathbb{N}}
\def\af{\alpha}
\def\rank{\mbox{rank}}
\newcommand{\Sig}{\Sigma}
\newcommand{\reff}[1]{(\ref{#1})}
\newcommand{\mc}[1]{\mathcal{#1}}
\newcommand{\supp}[1]{\mbox{supp}(#1)}
\newcommand{\cone}[1]{\mathit{cone}(#1)}
\renewcommand{\vec}[1]{\mathit{vec}(#1)}
\newcommand{\qmod}[1]{\mbox{QM}[#1]}
\newcommand{\st}{\mathit{s.t.}}
\newcommand{\bdes}{\begin{description}}
	\newcommand{\edes}{\end{description}}
\newcommand{\bal}{\begin{align}}
	\newcommand{\eal}{\end{align}}
\newcommand{\bnum}{\begin{enumerate}}
	\newcommand{\enum}{\end{enumerate}}
\newcommand{\bit}{\begin{itemize}}
	\newcommand{\eit}{\end{itemize}}
\newcommand{\bea}{\begin{eqnarray}}
	\newcommand{\eea}{\end{eqnarray}}
\newcommand{\be}{\begin{equation}}
	\newcommand{\ee}{\end{equation}}
\newcommand{\baray}{\begin{array}}
	\newcommand{\earay}{\end{array}}
\newcommand{\bsry}{\begin{subarray}}
	\newcommand{\esry}{\end{subarray}}
\newcommand{\bca}{\begin{cases}}
	\newcommand{\eca}{\end{cases}}
\newcommand{\bcen}{\begin{center}}
	\newcommand{\ecen}{\end{center}}
\newcommand{\bbm}{\begin{bmatrix}}
	\newcommand{\ebm}{\end{bmatrix}}
\newcommand{\bmx}{\begin{matrix}}
	\newcommand{\emx}{\end{matrix}}
\newcommand{\bpm}{\begin{pmatrix}}
	\newcommand{\epm}{\end{pmatrix}}
\newcommand{\btab}{\begin{tabular}}
	\newcommand{\etab}{\end{tabular}}
\newtheorem{theorem}{Theorem}[section]
\newtheorem{cor}[theorem]{Corollary}
\theoremstyle{definition}
\newtheorem{example}[theorem]{Example}
\newtheorem{algorithm}[theorem]{Algorithm}
\numberwithin{equation}{section}
\begin{document}

\title[Distributionally Robust Optimization]
	{Distributionally Robust Optimization with
		Polynomial Robust Constraints}

	\author[Jiawang Nie]{Jiawang Nie}
	\address{Department of Mathematics, 
    University of California San Diego, 9500 Gilman Drive,
    La Jolla, 92093, CA, USA.}
	\email{njw@math.ucsd.edu}

	\author[Suhan Zhong]{Suhan~Zhong}
	\address{Suhan Zhong, Department of Mathematics,
		Texas A\&M University, College Station, TX, USA, 77843-3368.}
	\email{suzhong@tamu.edu}
	
	\subjclass[2010]{90C23, 90C22, 90C15, 90C17}
	
	\date{}
	
	\keywords{distributionally robust optimization,
		robust constraints, polynomial, moment, relaxation}

	\begin{abstract}
		This paper studies distributionally robust optimization (DRO)
		with polynomial robust constraints.
		We give a Moment-SOS relaxation approach to solve the DRO.
		This reduces to solving linear conic optimization with semidefinite constraints.
		When the DRO problem is SOS-convex, we show that
		it is equivalent to the linear conic relaxation
		and it can be solved by the Moment-SOS algorithm.
		For nonconvex cases, we also give concrete conditions
		such that the DRO can be solved globally.
		Numerical experiments are given to show the efficiency of the method.
	\end{abstract}

	\maketitle

	\section{Introduction}
	Distributionally robust optimization (DRO)
	frequently appears in various applications.
	Let $x = (x_1, \ldots, x_n)$ denote the decision vector
	and $\xi = (\xi_1,\ldots, \xi_p)$ be the random vector.
	A typical DRO problem is
	\be \label{md:ecDRO}
	\left\{\begin{array}{cl}
		\min\limits_{x\in X} &  f(x)\\
		\st & \inf\limits_{\mu\in\mathcal{M}}\mathbb{E}_{\mu}[h(x,\xi)]\ge 0,
	\end{array}
	\right.\ee
	where $X\subseteq \re^n$ is a given constraining set for $x$ and
	\[
	f:\re^n\rightarrow \re,\quad
	h:\re^n\times \re^p\rightarrow\re
	\]
	are continuous functions.
	We denote by $f_{min}$ the optimal value of \reff{md:ecDRO}.
	The set $\mc{M}$ is a collection of
	Borel measures whose supports are contained in a prescribed set $S\subseteq \re^p$.
	The inf-constraint is called the robust constraint.
	The $\mathbb{E}_{\mu}[\cdot]$ denotes the expectation
	with respect to the measure $\mu$.
	The $\mc{M}$ is called {\it ambiguity set},
	which describes the uncertainty of the measure $\mu$.
	Typically, the set $\mc{M}$ is described by a set $Y$
	that constrains moments of $\xi$ up to a degree $d$.
	That is, the moment ambiguity set $\mc{M}$ is given as
	\begin{equation}
		\label{eq:mom_ambi}
		\mathcal{M} \,\coloneqq\, \Big \{\mu:
		\supp{\mu}\subseteq S,\,
		\mathbb{E}_{\mu}([\xi]_d)\in Y
		\Big \},
	\end{equation}
	where $\supp{\mu}$ denotes the support of $\mu$,
	and $[\xi]_d$ denotes the monomial vector of $\xi$
	ordered alphabetically with degrees up to $d$.

	It is generally a high challenge to solve DRO globally,
	especially when robust constraints are nonlinear in the decision variables.
	In this paper, we consider \reff{md:ecDRO} such that
	its defining functions are polynomials and
	the moment ambiguity set $\mc{M}$ is given as in \reff{eq:mom_ambi}.
	It is mostly open to solve \reff{md:ecDRO} globally
	when its robust constraints are polynomial (not linear) in $x$.
	In this paper, we give a Moment-SOS relaxation approach
	for solving DRO with polynomial robust constraints.
	Techniques in polynomial optimization are useful for solving DRO problems.
	There are extensive work for polynomial optimization during last two decades.
	A general polynomial optimization problem can be solved globally
	by a hierarchy of Moment-SOS relaxations.
	For interested readers, we also refer to the monographs
	\cite{MomentSOShierarchy,LasBk15,LaurentSOSmom2009,NieBook}
	for detailed introductions to polynomial optimization.

	Distributionally robust optimization has broad applications.
	It is widely used in
	finance \cite{DelageYeDRO},
	network design \cite{DataDrivenPolyDRO,YYDROnetwork},
	inventory problems \cite{DBAdaptiveDRO,XG22} and others.
	There is existing work on solving DRO.
	Approximation methods for solving DRO are studied in \cite{twostageDRO,GS10}.
	DRO with moment ambiguity sets are studied in \cite{Milz,MomentDROXu,JZhangDRO}.
	Some applications and theory are in the monographs \cite{LinFang22,StochasticOpt}.
	Recently, much attention has been paid to DRO problems given by polynomials.
	Polynomial chance-constrained DRO is studied in \cite{LasserreMixAmbiguity}.
	Polynomial constrained DRO is studied in \cite{KlerkDROpolydense,DataDrivenPolyDRO,NieDro2023}
	with applications in newsvendor problem and portfolio optimization.
	We also refer to \cite{ChuToh21,DataDrivenPolyDRO,SZCVaRRobustPortfolio} and references therein
	for more applications of DRO defined with polynomials.

	\subsection*{Contributions}
	
	This article studies DRO defined with
	polynomial robust constraints and moment-based ambiguity sets.
	Consider the optimization problem \reff{md:ecDRO}.
	Suppose $X$ is the semialgebraic set
	\begin{equation}\label{eq:setX}
		X = \{x\in\re^n: c_1(x)\ge 0,\ldots, c_m(x)\ge 0\},
	\end{equation}
	where each $c_i$ is a polynomial function in $x$.
	The moment ambiguity set $\mc{M}$ is given as in \reff{eq:mom_ambi}.
	Let $g = (g_1,\ldots, g_{m_1})$ be a tuple of polynomials.
	Suppose the support $\supp{\mu}$ is contained in the set
	\begin{equation}\label{eq:setS}
		S \, = \, \{\xi\in\re^p: g(\xi)\ge 0\} .
	\end{equation}
	We assume $Y$ is a set given by linear, second-order, and semidefinite constraints.

	It is generally very challenging to solve \reff{md:ecDRO}
	when $h(x,\xi)$ is nonlinear in $x$.
	In the previous work \cite{NieDro2023},
	a Moment-SOS approach is proposed to solve the DRO problem
	when $h(x,\xi)$ is linear in $x$.
	However, when $h(x,\xi)$ is polynomial in $x$,
	the DRO problem is much harder to solve.
	There exists very little work for this case.

	Suppose $d$ is degree of $\xi$ in $h(x,\xi)$.
	We write that
	\[
	h(x,\xi) \,=\, \sum\limits_{\beta\in\N_d^p} h_{\beta}(x)\xi^{\beta} = h(x)^T[\xi]_d,
	\]
	where
	$\N_d^p  \coloneqq  \, \{ \alpha = (\alpha_1, \ldots, \alpha_p)|\, \,
	\alpha_i\in\mathbb{N},\alpha_1+\cdots+\alpha_p\le d \}$
	is the power set of $\xi$ with the highest degree $d$ and
	\[
	[\xi]_{d} \,\coloneqq\,\bbm 1 & \xi_1 & \cdots & \xi_p & \xi_1^2 & \xi_1\xi_2 & \cdots & \xi_p^{d}\ebm^T.
	\]
	In the above, $h(x) \coloneqq (h_{\beta}(x))_{\beta\in\N_d^p}$
	is the coefficient polynomial vector of $h(x,\xi)$ with respect to $\xi$.
	Suppose $2t$ is the smallest even integer that is larger than or equal to
	the degree of $h(x)$ in $x$.
	Then, there is a unique matrix $H$,
	with $\binom{p+d}{d}$ rows and $\binom{n+2t}{2t}$ columns, such that
	\[
	h(x) \,=\, H\cdot[x]_{2t},
	\]
	where $[x]_{2t}\,\coloneqq\, \bbm 1 & x_1 & \cdots & x_n & x_1^2 & x_1x_2 & \cdots & x_n^{2t}\ebm^T$
	is the vector of monomials in $x$ with degrees up to $2t$.
	Let $w = (w_{\alpha})_{\alpha\in\N_{2t}^{n}}$ be a vector variable
	labelled by monomial powers of $x$, where $\N_{2t}^n$ is defined as in \reff{eq:Ndl}.
	If we set $w$
	\[
	w_{\alpha}\,=\, x^{\alpha}\,\coloneqq\, x_1^{\alpha_1}\cdots x_n^{\alpha_n}
	\quad \text{for} \quad
	\alpha = (\alpha_1,\ldots,\alpha_n)\in\N_{2t}^{n},
	\]
	then one can get
	\[
	h(x) = Hw,\quad h(x,\xi) = (Hw)^T[\xi]_d.
	\]
	Hence, the DRO problem \reff{md:ecDRO} is equivalent to
	\begin{equation}  \label{eq:1st:rel}
		\left\{\begin{array}{cl}
			\min\limits_{(x,w)} & f(x)\\
			\st & \inf\limits_{\mu\in\mc{M}} \mathbb{E}_{\mu}[(Hw)^T [\xi]_{d}]\ge 0,\\
			& w = [x]_{2t},\,x\in X,
		\end{array}
		\right.
	\end{equation}
	which has a linear robust constraint in $w$.
	We can solve \reff{eq:1st:rel}
	by the Moment-SOS approach proposed in \cite{NieDro2023}.
	Denote the conic hull
	\begin{equation}\label{eq:coneK}
		K\,\coloneqq\, cone(\{\mathbb{E}_{\mu}([\xi]_d):\mu\in\mc{M}\}).
	\end{equation}
	The robust constraint in \reff{eq:1st:rel} is equivalent to
	\[
	(Hw)^Ty\ge 0,\,\forall y\in K
	\quad\Leftrightarrow\quad
	(Hw)^T[\xi]_d\in K^*,
	\]
	where $K^* = \{q(\xi): vec(q)^Ty\ge 0,\forall y\in K\}$ is the dual cone of $K$,
	and $vec(q)$ denotes the coefficient vector of $q(\xi)$.
	The above right hand side means $(Hw)^T[\xi]_d$, as a polynomial in $\xi$,
	belongs to the cone $K^*$.
	A necessary condition for $w = [x]_{2t}$ with $x\in X$ is that
	\[
	w_{0} = 1,\quad M_t[w]\succeq 0,\quad
	L_{c_i}^{(t)}[w]\succeq 0
	\]
	for every $i \in [m]\coloneqq \{ 1,\ldots, m\}$.
	Here, the $M_t[w]$ is a moment matrix and
	each $L_{c_i}^{(t)}[w]$ is a localizing matrix of $w$
	(see Section~\ref{sc:prelim} for their definitions).
	Hence, \reff{eq:1st:rel} can be relaxed to
	the following linear conic optimization
	\begin{equation}\label{eq:2nd_rel}
		\left\{
		\begin{array}{cl}
			\min & \langle f,w\rangle\\
			\st & (Hw)^T[\xi]_d \in K^*,\\
			& L_{c_i}^{(k)}[w]\succeq 0,\,i=1,\ldots, m, \\
			& M_t[w]\succeq 0,\\
			& w_0 = 1, \, w\in\re^{\N_{2t}^n} .
		\end{array}
		\right.
	\end{equation}
	In the above, the bilinear operation
	$\langle f,w\rangle$ is defined such that
	\begin{equation}\label{eq:bilinearf}
		\langle f,w\rangle  \,\coloneqq\, \sum\limits_{\alpha\in\N_{2t}^n} f_{\alpha} w_{\alpha}\quad
		\mbox{for}\quad f = \sum\limits_{\alpha\in\N_{2t}^n} f_{\alpha} x^{\alpha}.
	\end{equation}
	When $K^*$ is given by linear, second order or semidefinite conic conditions,
	the optimization \reff{eq:2nd_rel} can be solved globally by Moment-SOS relaxations.
	This is discussed in the earlier work \cite{NieDro2023}.
	In particular, \reff{eq:2nd_rel} is a tight relaxation of \reff{md:ecDRO}
	if $f(x), -h(x,\xi)$ and each $-c_i(x)$ are SOS-convex in $x$.
	When \reff{md:ecDRO} is not convex,
	we may still get an optimizer by solving \reff{eq:2nd_rel}.
	However, when $h(x,\xi)$ is nonlinear in $x$,
	the performance of \reff{eq:2nd_rel} for solving
	the DRO \reff{md:ecDRO} globally is studied less in prior existing literature.
	
	We summarize our major contributions as follows.
	
	\begin{itemize}
		\item
		We study DRO with polynomial robust constraints and moment ambiguity sets.
		A moment approach is proposed to transform it into a linear conic optimization relaxation.
		We prove concrete conditions for
		this relaxation to solve the DRO \reff{md:ecDRO} globally.

		\item
		We propose a Moment-SOS approach to solve the DRO globally.
		If the DRO problem is SOS-convex, the proposed approach
		can obtain a global optimizer.
		For nonconvex cases, we give sufficient conditions for the Moment-SOS relaxations to be tight.
		Numerical experiments are given to show the efficiency of our method.
	\end{itemize}

	The rest of the paper is organized as follows.
	In Section~\ref{sc:prelim}, we review some basic results about polynomial optimization.
	In Section~\ref{sec:SolveDRO}, we transform the polynomial DRO
	into linear conic optimization by relaxing the nonlinear robust constraint.
	In Section~\ref{sc:mom_sos}, we propose a Moment-SOS algorithm to solve the DRO.
	In Section~\ref{sec:numerical}, we present some numerical experiments
	to show the efficiency of the method.
	Some conclusions and discussions are made in Section~\ref{sec:con}.
	
	\section{Preliminaries}
	\label{sc:prelim}
	
	\subsection*{Notation}
	The symbol $\mathbb{R}$ (resp., $\mathbb{R}_+$,  $\mathbb{N}$)
	denotes the set of real numbers
	(resp.,  nonnegative real numbers, nonnegative integers).
	For $t\in \mathbb{R}$, $\lceil t\rceil$
	denotes the smallest integer that is greater or equal to $t$.
	For an integer $k>0$, we denote $[k] \coloneqq \{1,\cdots,k\}$.
	The symbol $\mathbb{N}^n$ (resp., $\mathbb{R}^n, \re_+^n$) stands for the set of
	$n$-dimensional vectors with entries in $\mathbb{N}$ (resp., $\mathbb{R}, \re_+$).
	For a vector $v$, we use $\|v\|$ to denote its Euclidean norm.
	The notation $\delta_v$ denotes the atomic Dirac measure supported at $v$.
	Suppose $V$ is a vector space equipped with inner product $\langle \cdot,\cdot\rangle$.
	Let $V^*$ denote the dual space of $V$.
	For a set $S$ in $V$, its closure is denoted as $cl(S)$
	and its interior is denoted as $int(S)$.
	The conic hull of $S$ is
	\[
	cone(S)\,\coloneqq\, \Big\{\sum\limits_{i=1}^N \lambda_i s_i: \lambda_i\in \re_+, s_i\in S, N>0\Big\}.
	\]
	The dual cone of $S$ is given as
	\begin{equation}\label{eq:Sdual}
		S^* \,\coloneqq\, \{q\in V^*: \langle s,q\rangle \ge 0,\, \forall s\in S\}.
	\end{equation}
	We remark that the dual cone of $cone(S)$ is equal to $S^*$ for every $S\subseteq V$.
	The superscript $^T$ denotes the transpose of a matrix or vector.
	The symbol $e$ stands for the vector of all ones
	and $e_i$ is a vector with its $i$th entry being one and all other entries being zeros.
	The $I_n$ denotes the $n$-by-$n$ identity matrix.
	A square matrix $A\in\re^{n\times n}$ is said to be positive semidefinite or psd if
	$x^TAx\ge 0$ for all $x\in\re^n$, denoted as $A\succeq 0$.
	For a function $q$, we use $\nabla q$
	to denote its gradient and $\nabla^2 q$ to denote its Hessian matrix.

	Let $u = x$  or $\xi$ with dimension $l$.
	We use the notation $\mathbb{R}[u]$ to denote
	the ring of polynomials in $u$ with real coefficients.
	The $\mathbb{R}[u]_d$ is the $d$th degree truncation of $\re[u]$.
	If $f$ is a scalar polynomial in $u$, its degree is denoted by $\deg(f)$.
	If $f=(f_1,\ldots, f_m)$ is a tuple of polynomials,
	$\deg(f)$ denotes the highest degree of $\deg(f_i)$ for all $i\in[m]$.
	For $h\in\re[x,\xi]$, we use $\deg_x(h)$ to denote its partial degree in $x$.
	Let $\alpha = (\alpha_1,\ldots, \alpha_l)$ be a power vector of
	$u = (u_1, \ldots, u_l)$.
	We denote the monomial
	\[ u^{\alpha} = u_1^{\alpha_1}\cdots u_l^{\alpha_l}\quad
	\mbox{with}\quad
	|\alpha| \coloneqq \alpha_1+\cdots+\alpha_l.
	\]
	For a degree $d$, denote the power set
	\begin{equation}\label{eq:Ndl}
		\N_d^l \, \coloneqq  \, \{ \af = (\alpha_1, \ldots, \alpha_l)|\, \,
		\alpha_i\in\N,|\alpha|\le d \}.
	\end{equation}

	A polynomial $q\in\re[\xi]$ is said to be a sum-of-squares (SOS) if
	$q = q_1^2+\cdots +q_k^2$ for some $q_i\in\re[\xi]$.
	The SOS polynomial cone in $\xi$ is denoted by $\Sigma[\xi]$.
	Its $d$th degree truncation is
	\[
	\Sig[\xi]_d \, \coloneqq \, \Sig[\xi]\cap \re[\xi]_d .
	\]
	Clearly, each SOS polynomial is nonnegative everywhere,
	but a nonnegative polynomial may not be SOS.
	The approximation quality of standard SOS relaxations
	is studied in \cite{NieSOSbd}.
	A polynomial $\psi(x) \in \re[x]$ is said to be {\it SOS-convex}
	if its Hessian satisfies
	\begin{equation}\label{eq:SOSconv}
		\nabla^2 \psi(x)  = L(x)L(x)^T
	\end{equation}
	for some matrix polynomial $L(x)$.
	Similarly, $\psi(x)$ is said to be {\it SOS-concave} if $-\psi(x)$ is SOS-convex.
	Clearly, every SOS-convex polynomial is convex.
	However, not every convex polynomial is SOS-convex.
	Indeed, {\it every} convex polynomial $\psi\in\re[x_1,\ldots, x_n]_d$
	is SOS-convex if and only if
	$n=1$, or $d=2$, or $(n,d) = (2,4)$.
	This is shown in \cite{AAAPar13}.
	It is NP-hard to determine the convexity
	of polynomials of even degree four or higher \cite{AAA13}.
	For a given polynomial, verifying SOS-convexity can be done by
	solving a semidefinite program.
	By \cite[Lemma~7.1.3]{NieBook}, a polynomial $\psi\in\re[x]_{2k}$
	is SOS-convex if and only if there exists psd matrix
	$X$ of length $n\cdot \binom{n+k-1}{k-1}$ such that
	\begin{equation}\label{eq:sos_verify}
		\nabla^2 \psi(x) = (I_n\otimes [x]_{k-1})^T\cdot X\cdot (I_n\otimes [x]_{k-1}),
	\end{equation}
	where the notation $\otimes$ stands for the classical Kronecker product.
	We refer to \cite[Chapter~7]{NieBook}
	for how to check SOS-convex polynomials.
	When a convex polynomial is not SOS-convex, we can approximate it with
	SOS-convex polynomials \cite{AAAKlerk19},
	which typically requires to use polynomials of higher degrees.
	Similarly, SOS matrix polynomials have broad applications
	in optimization \cite{HilNie08,NiePMI,NieBook}.

	For a subset $S \subseteq \re^p$, we denote
	\[
	\mathscr{P}(S)  \, \coloneqq  \,
	\{q\in\re[\xi]: q(\xi)\ge 0\, \forall \xi \in S \}.
	\]
	For a degree $d$, denote $\mathscr{P}_d(S) \coloneqq \mathscr{P}(S)\cap \re[\xi]_d$.
	The $\mathscr{P}(S)$ and $\mathscr{P}_d(S)$ are closed convex polynomial cones.

	Let $y\in\re^{\N_{d}^p}$ be a truncated multi-sequence (tms) of $\xi$ with degree $d$.
	We define the bilinear operation
	\begin{equation}\label{eq:bilinear}
		\langle q, y\rangle  \coloneqq \sum\limits_{ \alpha\in\N_d^p } q_{\alpha} y_{\alpha}
		\quad \text{for} \quad
		q  = \sum_{ \alpha\in\N_d^p } q_{\alpha} \xi ^{\alpha}.
	\end{equation}
	The tms $y$ is said to admit a measure $\mu$ on $\re^p$ if
	\[
	y_{\alpha} = \int  \xi^{\alpha}\,{\tt d} \mu,\quad
	\forall \alpha\in\N_d^p.
	\]
	In applications, the measure $\mu$ is often required to
	be supported in a set $S \subseteq \re^p$, i.e.,
	$\supp{\mu} \subseteq S$.
	We let $meas(y,S)$ denote the set of measure $\mu$
	that is admitted by $y$ and $\supp{\mu} \subseteq S$.
	Note that when $y$ is the zero tms, $y$ admits the identically
	zero measure, whose support is the empty set.
	Define the cone of all such $y$ as
	\[
	\mathscr{R}_d(S) \, \coloneqq \,
	\{y\in\re^{\N_d^p}: meas(y,S)\not=\emptyset\}.
	\]
	The set $\mathscr{R}_d(S)$ is a convex cone.
	If $S$ is compact, then $\mathscr{R}_d(S)$ is a closed cone.
	When $S$ is not compact,  the cone $\mathscr{R}_d(S)$ may not be closed.
	The cones $\mathscr{P}_d(S)$ and $\mathscr{R}_d(S)$
	are dual to each other and it holds that
	\begin{equation}\label{eq:PRdual}
		\mathscr{P}_d(S) = \mathscr{R}_d(S)^*,\quad
		\mathscr{P}_d(S)^* =  cl(\mathscr{R}_d(S))  .
	\end{equation}
	The above dual relationship follows from the definition \reff{eq:Sdual}
	and the bilinear operation \reff{eq:bilinear}.
	The cone $\mathscr{R}_d(S)$ is generally not self-dual and
	cannot be given by semidefinite programming
	(except some special cases like univariate or quadratic moments).
	If we view $\re^{\N_d^p}$ as a standard Euclidean space
	and the dual relation is given by the Euclidean inner product,
	it is quite complicated to describe the dual cone of $\mathscr{R}_d(S)$.
	However, if we view $\re[\xi]_d$ as the dual space of $\re^{\N_d^p}$
	and the dual relation is given by \reff{eq:bilinear},
	then the dual cone $\mathscr{R}_d(S)^*$ can be conveniently expressed as
	the nonnegative polynomial cone $\mathscr{P}_d(S)$
	and the dual relationships \reff{eq:PRdual} hold.
	The above dual relation between truncated moment sequences
	and polynomials are commonly used in the field of polynomial and moment optimization.
	We refer to the surveys and books \cite{LasBk15,LaurentSOSmom2009,NieBook}
	for more details about this.

	Let $g = (g_1,\ldots, g_{m_1})$ be a tuple of polynomials in $\re[\xi]$.
	The quadratic module of $g$ is the set
	\[
	\qmod{g} \,\coloneqq\, \Sig[\xi] + g_1\cdot \Sig[\xi] + \cdots + g_{m_1}\cdot \Sig[\xi].
	\]
	The $k$th order truncation of $\qmod{g}$ is the subset
	\begin{equation}\label{eq:qmod}
		\qmod{g}_{2k} \,\coloneqq\, \Sig[\xi]_{2k} + g_1\cdot \Sig[\xi]_{2k-\deg(g_1)}
		+ \cdots +g_{m_1}\cdot \Sig[\xi]_{2k-\deg(g_{m_1})}.
	\end{equation}
	If the set $S = \{ \xi \in\re^p: g(\xi)\ge 0\}$, then
	\[
	\qmod{g}_{2k}\subseteq \qmod{g}_{2k+2}\subseteq \cdots
	\subseteq \qmod{g}\subseteq \mathscr{P}(S)
	\]
	for every degree $k$ such that $2k\ge \deg(g)$.
	The $\qmod{g}$ is said to be {\it archimedean}
	if there exists $q \in\qmod{g}$ such that $q(\xi)\ge 0$ gives a compact set.
	When $\qmod{g}$ is archimedean,
	we have $q  \in \qmod{g}$ if $q>0$ on $S$.
	This conclusion is often referenced as
	{\it Putinar's Positivstellensatz}
	(see \cite{NieBook,PutinarPositive}).

	Truncations of quadratic modules and their dual cones
	can be expressed in terms of
	localizing and moment matrices.
	Let $w\in\re^{\N_{2k}^p}$ be a tms of even degree $2k\ge \deg(g)$.
	The $k$th order {\em localizing matrix} of $w$ and $g_i$
	is the symmetric matrix $L_{g_i}^{(k)}[w]$ such that
	\be \label{locmat:gi}
	\vec{b_1}^T\Big(L_{g_i}^{(k)}[w]\Big)\vec{b_2}\,=\,\langle g_i b_1 b_2, w\rangle,
	\ee
	for all polynomials $b_1,b_2 \in \re[\xi]_s$ with $s= k-\lceil \deg(g_i)/2\rceil$.
	In the above,
	$\vec{b_1},\vec{b_2}$ denote the coefficient vectors of $b_1,b_2$, respectively.
	For the special case that $g_i = 1$ is the identically one polynomial,
	\begin{equation}\label{eq:mommat}
		M_k[w]\,\coloneqq\, L_1^{(k)}[w]
	\end{equation}
	is called the $k$th order {\it moment matrix} of $w$.

	For instance, for the polynomial tuple
	$g = (\xi_1, \xi_1- \xi_2^2)$, we have
	\[
	M_2[w] = \begin{bmatrix}
		w_{00} & w_{10} & w_{01} & w_{20} & w_{11} & w_{02}\\
		w_{10} & w_{20} & w_{11} & w_{30} & w_{21} & w_{12}\\
		w_{01} & w_{11} & w_{02} & w_{21} & w_{12} & w_{03}\\
		w_{20} & w_{30} & w_{21} & w_{40} & w_{31} & w_{22}\\
		w_{11} & w_{21} & w_{12} & w_{31} & w_{22} & w_{13}\\
		w_{02} & w_{12} & w_{03} & w_{22} & w_{13} & w_{04}
	\end{bmatrix}, \quad
	L_{\xi_1}^{(2)}[w] = \begin{bmatrix}
		w_{10} & w_{20} & w_{11}\\
		w_{20} & w_{30} & w_{21}\\
		w_{11} & w_{21} & w_{12}
	\end{bmatrix},
	\]
	\[
	L_{\xi_1-\xi_1^2}^{(2)}[w] = \begin{bmatrix}
		w_{10}-w_{02} & w_{20}-w_{12} & w_{11}-w_{03}\\
		w_{20}-w_{12} & w_{30}-w_{22} & w_{21}-w_{13}\\
		w_{11}-w_{03} & w_{21}-w_{13} & w_{12}-w_{04}
	\end{bmatrix}.
	\]

	The dual cone of the truncation $\qmod{g}_{2k}$  is the set
	\[
	\mathscr{S}[g]_{2k} \,\coloneqq\, \big\{w\in\re^{\N_{2k}^l}:
	M_{k}[w]\succeq 0, L_{g_1}^{(k)}[w]\succeq 0,\ldots,
	L_{g_{m_1}}^{(k)}[w]\succeq 0
	\big\}.
	\]
	In other words, it holds
	\begin{equation}\label{eq:qmdual}
		(\qmod{g}_{2k})^* \,=\, \mathscr{S}[g]_{2k}
	\end{equation}
	for all degrees $2k\ge \deg(g)$.
	We refer to \cite[Chapter~2]{NieBook} for this.
	For recent advances in polynomial optimization
	and truncated moment problems, we refer to
	\cite{MomentSOShierarchy,LasBk15,LaurentSOSmom2009,NieSOSbd,Tight18,NieBook}
	and references therein.
	Interestingly, moment relaxations are also
	very useful in solving tensor optimization
	\cite{njwSTNN17,NieZhang18}.

	\section{Relaxations for the polynomial robust constraint}
	\label{sec:SolveDRO}

	We consider the polynomial DRO problem \reff{md:ecDRO}.
	Suppose $d$ is the degree of $h(x,\xi)$ in
	$\xi = (\xi_1, \ldots, \xi_p)$ and
	\[
	h(x,\xi) \,=\, \sum\limits_{\beta\in\N_d^p}
	h_{\beta}(x)\xi^{\beta} = h(x)^T[\xi]_d,
	\]
	where $h(x)$ is the polynomial coefficient vector with respect to $\xi$.
	Denote the degree
	\begin{equation}\label{eq:td}
		t \,\coloneqq\, \max \big\{\left\lceil \deg_x(h)/2\right\rceil,
		\lceil\deg(f)/2\rceil, \lceil \deg(c)/2\rceil \big\},
	\end{equation}
	where $c = (c_1,\ldots, c_m)$.
	There is a unique matrix $H$, with $\binom{p+d}{d}$ rows and $\binom{n+2t}{2t}$ columns,
	such that
	\begin{equation}\label{eq:H[x]}
		h(x) = H\cdot [x]_{2t}.
	\end{equation}
	For instance, if $n = p = 2$ and
	\[
	h(x,\xi) = (1+x_1)^2+ x_1\xi_1+x_2\xi_2+(x_1^2+x_2)\xi_1^2+2x_1x_2\xi_1\xi_2+(x_1+x_2^2)\xi_2^2 ,
	\]
	then $t = 1$ and the matrix $H$ is given as below:
	\[
	h(x) = \bbm (1+x_1)^2\\ x_1\\x_2\\x_1^2+x_2\\2x_1x_2\\x_1+x_2^2\ebm
	=\underbrace{\bbm 1 & 2 & 0 & 1 & 0 & 0 \\
		0 & 1 & 0 & 0 & 0 & 0 \\
		0 & 0 & 1 & 0 & 0 & 0\\
		0 & 0 & 1 & 1 & 0 & 0\\
		0 & 0 & 0 & 0 & 2 & 0\\
		0 & 1 & 0 & 0 & 0 & 1\ebm}_H \cdot
	\underbrace{\bbm 1\\x_1\\x_2\\x_1^2\\x_1x_2\\x_2^2\ebm}_{[x]_2}.
	\]

	\subsection{Conic representation of robust constraints}
	\label{ssc:conic:rob}
	
	Let $w = (w_{\alpha})_{\alpha\in \N_{2t}^{n}}$
	be a tms variable and $H$ be the matrix as in \reff{eq:H[x]}.
	If $w = [x]_{2t}$ for some $x\in \re^n$,
	then by \reff{eq:H[x]}, we have
	\[
	h(x,\xi)\,=\,(Hw)^T[\xi]_d.
	\]
	The point $x$ is feasible for \reff{md:ecDRO}
	if and only if $x \in X$ and
	\be  \label{eq:HwE>=0}
	(Hw)^T\mathbb{E}_{\mu}([\xi]_{d}) \ge 0
	\quad \text{for all} \quad  \mu\in\mc{M}.
	\ee
	The above constraint can be reformulated
	in terms of dual conic conditions.
	Let $K$ be the conic hull
	\[
	K \,\coloneqq\,  cone(\{\mathbb{E}_{\mu}([\xi]_{d}):\mu\in \mc{M}\}).
	\]
	Then \reff{eq:HwE>=0} is equivalent to the dual cone membership
	\[
	(Hw)^T[\xi]_d \in K^*.
	\]
	Note that the dual cone $K^*$ is a subset of $\re[\xi]$.
	Assume the moment ambiguity set $\mc{M}$
	is given as in  \reff{eq:mom_ambi}, so
	\be \label{eq:coneK:meas}
	K \,=\, \mathscr{R}_{d}(S)\cap cone(Y),
	\ee
	where  $cone(Y)$ is the conic hull of $Y$ and
	\[
	\mathscr{R}_{d}(S) = \{y\in\re^{\N_d^p}: meas(y, S)\not=\emptyset\} .
	\]
	As introduced in Section~\ref{sc:prelim},  it holds that
	\[
	\mathscr{P}_d(S)\,=\,\mathscr{R}_d(S)^*,\quad
	Y^* \,=\, cone(Y)^*.
	\]
	If $K = cl(\mathscr{R}_d(S))\cap cl(cone(Y))$, its dual cone can be expressed as
	\begin{equation}\label{eq:Kdual}
		K^* \,=\, \mathscr{P}_{d}(S) + Y^*.
	\end{equation}
	This is because $K = (K^*)^*$ when $K$ is a closed convex cone.
	The equation \reff{eq:Kdual} is often satisfied.
	For instance, when $\mathscr{R}_d(S), cone(Y)$ are closed and
		their interiors have non-empty intersections \cite[Proposition B.2.7]{BenNem12}.
	The cones $\mathscr{R}_d(S)$ and $cone(Y)$ are closed when $S,Y$ are compact sets.

	An interesting fact is that $\mathscr{P}_d(S)$
	has explicit semidefinite expression when
	$\xi$ is univariate and $S$ is an interval.
	For this case, $K^*$ also have semidefinite expressions.

	\begin{example}\label{ex:Kdual}
		Suppose $\xi$ is univariate, $S = [0,1]$, and $Y\subseteq \re^4$ is given by
		\[
		y_0 = 1,\quad
		Ay = \left[\begin{array}{rrrr} 1 & -1 & 0 & 0\\
			0 & 1 & -2 & 0\\ 0 & 0 & 2 & -3\\
			0 & 0 & 0 & 3\end{array}\right]\bbm y_0\\y_1\\y_2\\y_3\ebm\ge 0.
		\]
		Then, \reff{eq:Kdual} holds as $S$ and $Y$ are both compact.
		Since $\xi$ is univariate and
		$S = \{\xi: \xi \ge 0, \, 1-\xi \ge 0\}$,
		we have (see \cite[Chapter~3]{NieBook})
		\[
		\mathscr{P}_3([0,1]) = \xi\cdot \Sig[\xi]_2 + (1-\xi)\cdot\Sig[\xi]_2.
		\]
		Since $cone(Y) = \{y\in\re^4: Ay\ge 0\}$, we get
		\[
		Y^* \quad =\quad \left\{ q^T[\xi]_3: q = (q_0,q_1,q_2,q_3)^T,\, q= A^Tu,\,  u\in\re_+^4 \right\}
		\]
		\[\,=\, \left\{q_0+q_1\xi+q_2\xi^2+q_3\xi^3
		\left|\begin{array}{l}
			q_0\ge 0,\quad q_0+q_1\ge 0,\\
			2q_0+2q_1+q_2\ge 0,\\
			3q_0+3q_1+1.5q_2+q_3\ge 0
		\end{array}
		\right.
		\right\}.
		\]
	\end{example}

	\bigskip
	Suppose $K^* = \mathscr{P}_d(S)+Y^*$.
	Then \reff{md:ecDRO} can be equivalently given as
	\begin{equation}\label{eq:po_equiv}
		\left\{\begin{array}{cl}
			\min\limits_x & f(x)\\
			\st & \big(H [x]_{2t}\big)^T[\xi]_d \in \mathscr{P}_{d}(S)+Y^*,\\
			& x\in X.
		\end{array}
		\right.
	\end{equation}
	Denote the projection operator $\pi: \re^{\N_{2t}^n}\to \re^n$ such that
	\begin{equation}\label{eq:pi}
		\pi(w) \,\coloneqq\, \big( w_{e_1},\ldots, w_{e_n} \big).
	\end{equation}
	A necessary condition for $w = [x]_{2t}$ is that
	\[
	w_0 = 1,\quad x = \pi(w),\quad M_{t}[w] \succeq 0.
	\]
	We refer to \reff{eq:mommat}
	for the moment matrix $M_{t}[w]$.
	So \reff{eq:po_equiv} can be relaxed to
	\be  \label{eq:momrel}
	\left\{\begin{array}{cl}
		\min\limits_{ (x, w)} & f(x)\\
		\st & (Hw)^T[\xi]_d \in \mathscr{P}_{d}(S)+Y^*, \\
		& x = \pi(w) \in X, \,\,  M_{t}[w]\succeq 0,\\
		& w_0 = 1,\,\,  w\in\re^{\N_{2t}^n} .
	\end{array}
	\right.
	\ee
	The following gives a condition for \reff{eq:momrel}
	to be a tight relaxation for \reff{eq:po_equiv}.
	For this purpose, we introduce Jensen's inequality for SOS-convex polynomials:
	if $\varphi\in\re[x]_{2t}$ is SOS-convex and
	$w\in\re^{\N_{2t}^n}$ is a tms such that $w_0 = 1$ and $M_t[w]\succeq 0$, then
	\be \label{ineq:Jensen}
	\varphi(\pi(w))\le \langle \varphi,w\rangle,
	\ee
	where the bilinear operation $\langle \varphi,w\rangle$ is defined as in \reff{eq:bilinearf}.
	We refer to \cite{Las09,NieBook} for the proof.
	\begin{theorem}\label{thm:eqv1}
		Suppose \reff{eq:Kdual} holds and $\mathbb{E}_{\mu}[h(x,\xi)]$
		is SOS-concave in $x$ for each $\mu\in \mc{M}$.
		Then, \reff{eq:momrel} is a tight relaxation for \reff{md:ecDRO},
		in the following sense: $x$ is an optimizer of \reff{md:ecDRO}
		if and only if $(x, w)$ is an optimizer of \reff{eq:momrel}.
	\end{theorem}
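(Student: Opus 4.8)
The plan is to show that \reff{md:ecDRO} and \reff{eq:momrel} have the same optimal value and that their optimizers correspond via the lift $x\mapsto(x,[x]_{2t})$ and the projection $(x,w)\mapsto x=\pi(w)$. Since \reff{eq:Kdual} is assumed, \reff{md:ecDRO} is equivalent to \reff{eq:po_equiv}, so it suffices to compare \reff{eq:po_equiv} with \reff{eq:momrel}. The easy direction is that \reff{eq:momrel} is a genuine relaxation: if $x$ is feasible for \reff{eq:po_equiv}, then $w=[x]_{2t}$ satisfies $w_0=1$, $x=\pi(w)$, and $M_t[w]=[x]_t[x]_t^T\succeq 0$ (the moment matrix of the Dirac measure $\delta_x$ is rank-one psd), while the robust membership $(Hw)^T[\xi]_d=(H[x]_{2t})^T[\xi]_d\in\mathscr{P}_d(S)+Y^*$ is inherited verbatim. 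Hence $(x,w)$ is feasible for \reff{eq:momrel} with the same objective $f(x)$, so the optimal value of \reff{eq:momrel} is at most $f_{min}$.

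The heart of the argument is the converse: given an optimizer $(x^*,w^*)$ of \reff{eq:momrel}, I would show $x^*=\pi(w^*)$ is feasible for \reff{md:ecDRO}, i.e. $\mathbb{E}_{\mu}[h(x^*,\xi)]\ge 0$ for every $\mu\in\mc{M}$. The difficulty is that $w^*$ need not equal $[x^*]_{2t}$, so $(Hw^*)^T[\xi]_d$ is generally not $h(x^*,\xi)$. The robust constraint of \reff{eq:momrel} gives $(Hw^*)^T[\xi]_d\in\mathscr{P}_d(S)+Y^*=K^*$; pairing this polynomial with $y=\mathbb{E}_\mu([\xi]_d)\in K$ through \reff{eq:bilinear} yields $(Hw^*)^Ty\ge 0$ for every $\mu\in\mc{M}$. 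Unwinding the pairing, one checks that $(Hw^*)^Ty=\langle \varphi_\mu,w^*\rangle$, where $\varphi_\mu(x)\coloneqq\mathbb{E}_\mu[h(x,\xi)]=h(x)^Ty=[x]_{2t}^TH^Ty\in\re[x]_{2t}$ is the polynomial in $x$ obtained after integrating out $\xi$ (its degree in $x$ is at most $\deg_x(h)\le 2t$ by \reff{eq:td}).

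The key step is then Jensen's inequality \reff{ineq:Jensen}. By hypothesis $\varphi_\mu$ is SOS-concave, so $-\varphi_\mu\in\re[x]_{2t}$ is SOS-convex; applying \reff{ineq:Jensen} to $-\varphi_\mu$ with the tms $w^*$ (which satisfies $w_0^*=1$ and $M_t[w^*]\succeq 0$) gives $-\varphi_\mu(\pi(w^*))\le\langle -\varphi_\mu,w^*\rangle$, that is, $\varphi_\mu(x^*)\ge\langle \varphi_\mu,w^*\rangle=(Hw^*)^Ty\ge 0$. As this holds for every $\mu\in\mc{M}$, we get $\inf_{\mu\in\mc{M}}\mathbb{E}_{\mu}[h(x^*,\xi)]\ge 0$, so $x^*$ is feasible for \reff{md:ecDRO} with objective $f(x^*)$, forcing the optimal value of \reff{eq:momrel} to be at least $f_{min}$. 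Combined with the relaxation bound, the two optimal values coincide, and the optimizer correspondence follows: a DRO optimizer $x$ lifts to a feasible $(x,[x]_{2t})$ attaining the common value, and any relaxation optimizer $(x^*,w^*)$ yields, by the above, a DRO-feasible $x^*$ attaining it. I expect the main obstacle to be exactly this converse step, namely bounding $\varphi_\mu(\pi(w^*))$ below by the $w$-linear functional $\langle\varphi_\mu,w^*\rangle$ when $w^*$ is only a pseudo-moment sequence; it is precisely the SOS-concavity assumption that makes Jensen's inequality available and converts the relaxed, $w$-linear robust constraint into the genuine pointwise robust constraint at $x^*$.
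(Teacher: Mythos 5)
Your proposal is correct and follows essentially the same route as the paper's own proof: reduce to \reff{eq:po_equiv} via \reff{eq:Kdual}, lift feasible $x$ to $w=[x]_{2t}$ for the easy direction, and for the converse apply Jensen's inequality \reff{ineq:Jensen} to the SOS-convex polynomial $-\mathbb{E}_\mu[h(x,\xi)]$ together with the identity $\langle \mathbb{E}_\mu[h(\cdot,\xi)],w\rangle=(Hw)^T\mathbb{E}_\mu([\xi]_d)$ to transfer the relaxed robust constraint to the projection $\pi(w)$. Your writeup is, if anything, slightly more explicit than the paper about the coefficient-vector identity and the degree bound $\deg_x(h)\le 2t$, but the argument is the same.
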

	\begin{proof}
		When \reff{eq:Kdual} holds, \reff{md:ecDRO} is equivalent to \reff{eq:po_equiv}.
		If $x$ is a feasible point of \reff{eq:po_equiv},
		then $w = [x]_{2t}$ must also be feasible for \reff{eq:momrel}.
		Conversely, if $(x,w)$ is a feasible point of \reff{eq:momrel}, then
		\[
		\mathbb{E}_{\mu}[(Hw)^T[\xi]_d]\ge 0 \,\, \forall \mu\in\mc{M}.
		\]
		Fix an arbitrary $\mu\in \mc{M}$, and let
		\[ \varphi(x) \,\coloneqq\,  -\mathbb{E}_{\mu}[h(x,\xi)]
		\,=\, -h(x)^T\mathbb{E}_{\mu}([\xi]_d). \]
		Since $\varphi$ is SOS-convex and $w$ is feasible for \reff{eq:momrel},
		Jensen's inequality \reff{ineq:Jensen} implies
		\[
		-\mathbb{E}_{\mu}[h(\pi(w),\xi)] \,=\, \,   \varphi(\pi(w))\le
		\langle \varphi,w\rangle \, \,=\, -\mathbb{E}_{\mu}[\langle Hw, [\xi]_d\rangle] ,
		\]
		so it holds
		\[
		\mathbb{E}_{\mu}[h(\pi(w),\xi)]\, \ge\, \mathbb{E}_{\mu}[\langle Hw, [\xi]_d\rangle ].
		\]
		Since the above inequality holds for arbitrary $\mu\in\mc{M}$, we get
		\[
		\mathbb{E}_{\mu}[h(\pi(w), \xi)] \,\ge\,
		\mathbb{E}_{\mu}[\langle Hw,[\xi]_d\rangle] \,\ge\, 0
		\quad \forall \mu\in \mc{M}.
		\]
		So $x = \pi(w)$ is feasible for \reff{eq:po_equiv}
		whenever $(x,w)$ is feasible for \reff{eq:momrel}.
		Since they have the same objective,
		\reff{eq:momrel} is a tight relaxation for \reff{md:ecDRO}.
	\end{proof}
	When the distribution of $\mu$ is unknown, a sufficient condition for
	$\mathbb{E}_{\mu}[h(x,\xi)]$ to be SOS-concave is that
	$h(x,\xi)$ is SOS-concave in $x$ for every $\xi\in S$, i.e.,
	\[ -\nabla_x^2 h(x,\xi) \,=\, L_{\xi}(x)L_{\xi}(x)^T\succeq 0,\quad \forall \xi\in S, \]
	where $L_{\xi}(x)$ is a polynomial matrix in $x$ with coefficients
	depending on $\xi$.
	Then,
	\[ -\mathbb{E}_{\mu}[\nabla_x^2 h(x,\xi)]
	\,=\, \mathbb{E}_{\mu}\big[ L_{\xi}(x)L_{\xi}(x)^T\big] =
	\int_{S} L_{\xi}(x)L_{\xi}(x)^T{\tt d}\mu(\xi) \]
	is an SOS matrix polynomial in $x$,
	since the cone of SOS matrix polynomials (for a fixed degree) is closed.
	We refer to \cite[Chapter~7\&10]{NieBook} for this fact.

	The problem~\reff{eq:momrel} is linear conic optimization
	when $f$ is linear and $X$ is given by linear constraints.
	Here is an example.

	\begin{example}\label{ex:uni_xi_lin}
		Consider the DRO problem
		\begin{equation}\label{eq:uni_xi_lin}
			\left\{
			\begin{array}{cl}
				\min\limits_{x\in\re^2} &  x_1 - 2x_2\\
				\st & \inf\limits_{\mu\in\mc{M}}\,\mathbb{E}_{\mu}
				[1+x_1\xi-2x_2\xi^2+(x_1-x_2^2)\xi^3]\ge 0,\\
				& x\ge 0,\quad 1-e^Tx\ge 0,
			\end{array}
			\right.
		\end{equation}
		where $\xi\in S=[0,1]$ is a univariate random variable
		and $\mc{M}$ is given as in Example~\ref{ex:Kdual}.
		For $t = 1$ and $d = 3$,
		the problem \reff{eq:momrel} is a semidefinite program
		\[
		\left\{
		\begin{array}{cl}
			\min\limits_{w} & w_{10} - 2w_{01}\\
			\st
			& \bbm w_{00}\\w_{10}\\-2w_{01}\\w_{10}-w_{02}\ebm^T
			\left[\begin{array}{l} 1\\ \xi\\ \xi^2\\ \xi^3\end{array}\right]
			\in \mathscr{P}_3([0,1])+Y^*,\\
			&  \bbm w_{00} & w_{10} & w_{01}\\ w_{10} & w_{20} & w_{11}\\
			w_{01} & w_{11} & w_{02}\ebm \succeq 0,\\
			& w_{10}\ge 0,\, w_{01}\ge 0, 1-w_{10}-w_{01}\ge 0, \\
			& w_{00} = 1, \,\, w\in\re^{\N_2^2} .
		\end{array}
		\right.
		\]
		where $Y^*$ is given explicitly in Example~\ref{ex:Kdual}.
		Its optimizer is
		\[
		w^* \,=\, (1, 0, 1, 0, 0, 1).
		\]
		The $h(x,\xi)$ is SOS-concave in $x$ for every $\xi\in S = [0,1]$, since
		\[ -\nabla_x^2h(x,\xi) = \bbm 0 &0\\0 & 2\xi^3\ebm = 2\xi
		\bbm 0\\ \xi\ebm\bbm 0\\ \xi\ebm^T\succeq 0. \]
		Then $\mathbb{E}_{\mu}[h(x,\xi)]$ is SOS-concave for all $\mu\in\mc{M}$,
		since $-\mathbb{E}_{\mu}[\nabla_x^2h(x,\xi)]$ is a constant psd matrix.
		By Theorem~\ref{thm:eqv1}, a global optimizer of \reff{eq:uni_xi_lin} is
		$x^* = \pi(w^*) \,=\, (0, 1).$
		Hence this DRO has the global minimum $f_{min} = f(x^*) = -2$.
	\end{example}
	
	\subsection{Moment relaxations for functions in $x$}

	When $f$ or any $c_i$ is nonlinear,
	we can further relax \reff{eq:momrel}
	to a linear conic optimization problem.
	Recall that
	\[
	X = \{x\in\re^n: c_1(x)\ge 0, \ldots, \, c_m(x) \ge 0  \}.
	\]
	Let $t$ be the degree as in \reff{eq:td}.
	If $w = [x]_{2t}$ for some $x\in X$, then
	\[
	f(x) = \langle f, w\rangle,\quad
	L_{c_i}^{(t)}[w]\succeq 0\, (i\in[m]).
	\]
	This leads to the moment relaxation
	\be  \label{eq:momrel1}
	\left\{\begin{array}{cl}
		\min & \langle f, w\rangle\\
		\st
		& (Hw)^T [\xi]_d \in \mathscr{P}_{d}(S)+Y^*,\\
		& L_{c_i}^{(t)}[w]\succeq 0,\,i\in[m], \\
		& M_t[w]\succeq 0,\\
		& w_0 = 1, \, w\in \re^{\N_{2t}^n}.\\
	\end{array}
	\right.
	\ee
	Interestingly, \reff{eq:momrel1} and \reff{eq:momrel}
	are equivalent if $f$ and each $-c_i$ are SOS-convex polynomials.
	Let $f^*$ be the optimal value of \reff{eq:momrel1}.
	Clearly, $f^*$ is a lower bound of the minimum value $f_{min}$ of \reff{md:ecDRO}.

	\begin{theorem}\label{thm:sosproj}
		Suppose $f, -c_1,\ldots, -c_m$ are SOS-convex,
		then \reff{eq:momrel} and \reff{eq:momrel1}
		have the same optimal value and $w^*$ is an optimizer of \reff{eq:momrel1}
		if and only if $x^*\coloneqq \pi(w^*)$ is an optimizer of \reff{eq:momrel}.
		Moreover, when \reff{eq:Kdual} holds and $\mathbb{E}_{\mu}[h(x,\xi)]$
		is SOS-concave for every $\mu\in\mc{M}$,
		\reff{eq:momrel1} is a tight relaxation of \reff{md:ecDRO}
		in the sense as in Theorem~\ref{thm:eqv1}.
	\end{theorem}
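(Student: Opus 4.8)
The plan is to prove that the two optimal values coincide by establishing both inequalities separately and then reading off the optimizer correspondence; write $\theta$ for the optimal value of \reff{eq:momrel} and recall that $f^*$ denotes that of \reff{eq:momrel1}. The only analytic input is Jensen's inequality \reff{ineq:Jensen}. First I would record two consequences valid for every $w$ with $w_0=1$ and $M_t[w]\succeq0$: applying \reff{ineq:Jensen} to the SOS-convex $f$ gives $f(\pi(w))\le\langle f,w\rangle$, and applying it to each SOS-convex $-c_i$ gives $c_i(\pi(w))\ge\langle c_i,w\rangle$. Since $\langle c_i,w\rangle$ is exactly the $(0,0)$-entry of $L_{c_i}^{(t)}[w]$, positivity of that matrix forces $\langle c_i,w\rangle\ge0$, so that $L_{c_i}^{(t)}[w]\succeq0$ for all $i$ already implies $\pi(w)\in X$.

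The bound $\theta\le f^*$ then requires only the SOS-convexity of $f,-c_1,\ldots,-c_m$. Given an optimizer $w^*$ of \reff{eq:momrel1}, the pair $(\pi(w^*),w^*)$ is feasible for \reff{eq:momrel}: the robust membership, $M_t[w^*]\succeq0$ and $w^*_0=1$ are constraints common to both problems, and $\pi(w^*)\in X$ by the previous paragraph. Its objective obeys $f(\pi(w^*))\le\langle f,w^*\rangle=f^*$, so $\theta\le f^*$.

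For the reverse bound I would take an optimizer $(x^*,w^*)$ of \reff{eq:momrel} and test the atomic sequence $[x^*]_{2t}$ in \reff{eq:momrel1}. Because $x^*\in X$, the atomic identities $M_t[[x^*]_{2t}]=[x^*]_t[x^*]_t^{T}\succeq0$ and $L_{c_i}^{(t)}[[x^*]_{2t}]=c_i(x^*)\,[x^*]_{s}[x^*]_{s}^{T}\succeq0$ (with $s=t-\lceil\deg(c_i)/2\rceil$) hold, while $\langle f,[x^*]_{2t}\rangle=f(x^*)$ and the normalization is clear. The one condition that is not automatic---and this is the crux---is the robust membership for the atom, $(H[x^*]_{2t})^{T}[\xi]_d=h(x^*,\xi)\in\mathscr{P}_d(S)+Y^*$: feasibility of \reff{eq:momrel} only certifies $(Hw^*)^{T}[\xi]_d\in\mathscr{P}_d(S)+Y^*$, and $Hw^*\neq H[x^*]_{2t}$ once $w^*$ fails to be atomic. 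This is exactly where the hypotheses of the ``moreover'' clause enter. Assuming \reff{eq:Kdual} and that $\mathbb{E}_\mu[h(x,\xi)]$ is SOS-concave, I apply Jensen to the SOS-convex $-\mathbb{E}_\mu[h(\cdot,\xi)]$ just as in the proof of Theorem~\ref{thm:eqv1}: for every $\mu\in\mc{M}$, $\mathbb{E}_\mu[h(x^*,\xi)]\ge\langle\mathbb{E}_\mu[h(\cdot,\xi)],w^*\rangle=\mathbb{E}_\mu[(Hw^*)^{T}[\xi]_d]\ge0$, the last inequality by feasibility of $w^*$ in \reff{eq:momrel} together with \reff{eq:Kdual}. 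Under \reff{eq:Kdual} the statement $\mathbb{E}_\mu[h(x^*,\xi)]\ge0$ for all $\mu\in\mc{M}$ is equivalent to $h(x^*,\xi)\in\mathscr{P}_d(S)+Y^*$, so $[x^*]_{2t}$ is feasible for \reff{eq:momrel1} with objective $f(x^*)=\theta$, giving $f^*\le\theta$.

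Combining the two bounds gives $\theta=f^*$. The optimizer correspondence is then immediate: for an optimizer $w^*$ of \reff{eq:momrel1}, the feasible point $(\pi(w^*),w^*)$ of \reff{eq:momrel} attains the common value, so $\pi(w^*)$ is an optimizer of \reff{eq:momrel}; conversely, the atom $[x^*]_{2t}$ attached to an optimizer $x^*$ of \reff{eq:momrel} attains $f^*$ and is therefore an optimizer of \reff{eq:momrel1}. Finally the tightness for \reff{md:ecDRO} is inherited for free: under the same hypotheses Theorem~\ref{thm:eqv1} gives $\theta=f_{min}$, whence $f^*=f_{min}$ and $\pi$ carries optimizers of \reff{eq:momrel1} to optimizers of \reff{md:ecDRO}. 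I expect the transfer of the robust membership from the general moment vector $w^*$ to the atom $[x^*]_{2t}$ to be the only genuinely delicate step; it is precisely the SOS-concavity of $\mathbb{E}_\mu[h]$, routed through Theorem~\ref{thm:eqv1}, that makes it go, and the reverse inequality $f^*\le\theta$ can fail without it.
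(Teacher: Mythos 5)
Your argument is sound step by step, but it does not follow the paper's route, and the point of divergence is exactly the step you singled out as the crux. The paper's proof, like yours, uses Jensen's inequality to map feasible points of \reff{eq:momrel1} to feasible points of \reff{eq:momrel}; but for the reverse inequality $\hat{f}\ge f^*$ (your $f^*\le\theta$, where $\hat{f}$ denotes the optimal value of \reff{eq:momrel}) the paper simply writes ``Clearly, $\hat{f}\ge f^*$,'' and on that basis asserts the equality of optimal values and the optimizer correspondence under SOS-convexity of $f,-c_1,\ldots,-c_m$ alone, reserving \reff{eq:Kdual} and the SOS-concavity of $\mathbb{E}_{\mu}[h(x,\xi)]$ only for the final tightness claim. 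You instead prove $f^*\le\theta$ by lifting an optimizer $x^*$ of \reff{eq:momrel} to the atom $[x^*]_{2t}$, and you correctly observe that the only non-automatic constraint is the robust membership $h(x^*,\xi)\in\mathscr{P}_d(S)+Y^*$, which you obtain from \reff{eq:Kdual} together with SOS-concavity of $\mathbb{E}_{\mu}[h]$ via the Jensen argument of Theorem~\ref{thm:eqv1}. Consequently your proof establishes the value equality only under the combined hypotheses, which is formally weaker than the first sentence of the theorem.

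Your caution, however, is vindicated, and the paper's shortcut is not: the inequality $\hat{f}\ge f^*$ is neither clear nor true in general. Take $n=p=1$, $f(x)=x$, $c_1(x)=1-x^2$, $S=[0,1]$, $h(x,\xi)=x^2+\tfrac{3}{2}x+10\xi$, and $Y=\{y\in\re^{\N_1^1}: y_0=1,\ 0\le y_1\le y_0\}$, so that $t=1$, $\mathscr{P}_1(S)+Y^*=\{q_0+q_1\xi:\ q_0\ge 0,\ q_0+q_1\ge 0\}$, and \reff{eq:Kdual} holds. The robust membership in both relaxations amounts to $\tfrac{3}{2}w_1+w_2\ge 0$ (the coefficient of $\xi$ being $10w_0$). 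In \reff{eq:momrel} the point $w=(1,-1,\tfrac{3}{2})$ is feasible, since $M_1[w]\succeq 0$ and $\pi(w)=-1\in X$, so $\hat{f}=-1$; in \reff{eq:momrel1} the constraints $w_2\ge w_1^2$, $1-w_2\ge 0$, $\tfrac{3}{2}w_1+w_2\ge 0$ force $w_1\ge-\tfrac{2}{3}$, so $f^*=-\tfrac{2}{3}>\hat{f}$, and the projection of the optimizer of \reff{eq:momrel1} is not an optimizer of \reff{eq:momrel}. Here $f$ and $-c_1$ are SOS-convex, but $\mathbb{E}_{\mu}[h(\cdot,\xi)]$ is convex rather than concave in $x$: precisely the hypothesis you insisted on is the one that fails. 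So the first assertion of the theorem is false as literally stated, the paper's proof has a genuine gap at ``Clearly, $\hat{f}\ge f^*$,'' and your version---carrying \reff{eq:Kdual} and the SOS-concavity of $\mathbb{E}_{\mu}[h]$ through the entire argument---is the defensible form of the result.
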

	\begin{proof}
		Let $w$ be a feasible point of \reff{eq:momrel1}.
		When each $c_i$ is SOS-concave, we have
		\[
		c_i(\pi(w))\,\ge\, \langle c_i,w\rangle \,\ge\, 0,\quad\forall i\in[m].
		\]
		In the above, the first inequality is due to
		Jensen's inequality \reff{ineq:Jensen}
		and the second one is implied by $L_{c_i}^{(t)}[w]\succeq 0$.
		So $x=\pi(w)$ is feasible for \reff{eq:momrel} if
		$w$ is feasible for \reff{eq:momrel1}.
		Let $\hat{f},f^*$ denote the optimal values of
		\reff{eq:momrel} and  \reff{eq:momrel1}, respectively.
		Clearly, $\hat{f}\ge f^*$.
		Suppose $w^*$ is an optimizer of \reff{eq:momrel1}.
		Then $x^*=\pi(w^*)$ is feasible for \reff{eq:momrel}, thus
		\[
		f(\pi(w^*))\,\ge\, \hat{f}\, \ge \, f^* \,=\, \langle f,w^*\rangle.
		\]
		On the other hand, since $f$ is SOS-convex, we have
		\[
		f(\pi(w^*)) \,\le\, \langle f,w^*\rangle
		\]
		by Jensen's inequality \reff{ineq:Jensen} again.
		
		When \reff{eq:Kdual} holds and $\mathbb{E}_{\mu}[h(x,\xi)]$
		is SOS-concave for every $\mu\in\mc{M}$, the last statement
		follows from Theorem~\ref{thm:eqv1}.
	\end{proof}

	The following is an example to illustrate Theorem~\ref{thm:sosproj}.
	
	\begin{example}\label{ex:uni_xi_nol}
		Consider the DRO problem
		\begin{equation}\label{eq:uni_xi_nol}
			\left\{
			\begin{array}{cl}
				\min\limits_{x\in\re^2} &  2x_1-3x_2+x_1^2-x_1x_2+x_2^2\\
				\st & \inf\limits_{\mu\in\mc{M}}\,\mathbb{E}_{\mu}
				[(x_2-x_1^2)\xi+x_1x_2\xi^2+(x_1-x_2^2)\xi^3]\ge 0,\\
				& 1-x_1^2\ge 0,\, 1-x_2^2\ge 0,
			\end{array}
			\right.
		\end{equation}
		where $\xi\in S = [0,1]$ is univariate and $\mc{M}$
		is given as in Example~\ref{ex:Kdual}.
		It is easy to verify that the objective polynomial is SOS-convex
		and all constraining polynomials are SOS-concave (for every $\xi\in S=[0,1]$), since
		\[
		\nabla^2f(x) = \left[\begin{array}{rr} 2 & -1\\-1 & 2\end{array}\right]\succeq 0,\quad
		-\nabla_x^2 h(x,\xi) = \xi\left[\begin{array}{rr}
			2 & -\xi\\ -\xi & 2\xi^2\end{array}\right]\succeq 0.
		\]
		For $t = 1$ and $d=3$, the relaxation \reff{eq:momrel1} is a semidefinite program:
		\[
		\left\{
		\begin{array}{cl}
			\min\limits_{w} & 2w_{10}-3w_{01}+w_{20}-w_{11}+w_{02}\\
			\st
			& w\in\re^{\N_2^2},\,w_{00} = 1,\,\\
			& \bbm 0\\ w_{01}-w_{20}\\ w_{11}\\ w_{10}-w_{02}\ebm^T
			\left[\begin{array}{l} 1\\ \xi\\ \xi^2\\ \xi^3\end{array}\right]
			\in Y^*+\mathscr{P}_3([0,1]),\\
			&  \bbm w_{00} & w_{10} & w_{01}\\ w_{10} & w_{20} & w_{11}\\
			w_{01} & w_{11} & w_{02}\ebm \succeq 0,\\
			& w_{00}-w_{20}\ge 0,\, w_{00}-w_{02}\ge 0.
		\end{array}
		\right.
		\]
		Its optimal value and optimizer are respectively
		\[
		f^* = -\frac{9}{4},\quad
		w^* = \frac{1}{4}  (4,\, -2,\, 4,\, 1,\, -2,\, 4).
		\]
		By Theorem~\ref{eq:uni_xi_lin}, \reff{eq:uni_xi_nol} has
		a global minimizer
		$x^* =  \pi(w^*) = \frac{1}{2} (-1, 2)$
		and the global minimum $f_{min} = f^* = -\frac{9}{4}$.
	\end{example}

	\section{A Moment-SOS Algorithm}
	\label{sc:mom_sos}

	In this section, we propose a Moment-SOS algorithm for solving
	the optimization problem~\reff{eq:momrel1},
	which is a relaxation of DRO \reff{md:ecDRO}
	with polynomial robust constraints.
	In \reff{eq:momrel1}, the $\mathscr{P}_d(S)$
	is the cone of polynomials in $\re[\xi]_d$
	that are nonnegative on $S$.
	Usually, the cone $\mathscr{P}_d(S)$ does not have
	a computationally convenient expression.
	Assume the set $S$ is given as
	\begin{equation}
		\label{eq:S}
		S\,\coloneqq\, \{\xi\in\re^p: g_1(\xi)\ge 0,\ldots, g_{m_1}(\xi)\ge 0\},
	\end{equation}
	for a polynomial tuple $g = (g_1,\ldots, g_{m_1})$ in $\xi$.
	Let
	\begin{equation}\label{eq:d0}
		d_0\,\coloneqq\, \max\big\{ \lceil d/2\rceil, \lceil \deg(g)/2\rceil \big\}.
	\end{equation}
	For each degree $k\ge d_0$, the truncations of $\qmod{g}_{2k}$
	(see \reff{eq:qmod} the notation) satisfies
	\[
	\qmod{g}_{2k}\cap \re[\xi]_d \,\subseteq \,
	\qmod{g}_{2k+2}\cap \re[\xi]_d\,\subseteq \cdots\subseteq\, \mathscr{P}_d(S).
	\]
	In particular, when $\qmod{g}$ is archimedean,
	it holds that (see \cite[Chapter~8]{NieBook})
	\[
	\mbox{int} \Big( \mathscr{P}_d(S) \Big) \, = \,
	\bigcap_{ k \ge d_0}  \qmod{g}_{2k}\cap \re[\xi]_d .
	\]
	So, we consider the $k$th order SOS approximation for
	\reff{eq:momrel1}:
	\begin{equation}  \label{eq:mom_sos}
		\left\{\begin{array}{rl}
			f_k\,\coloneqq\,\min & \langle f, w\rangle\\
			\st
			& (Hw)^T [\xi]_d \in (\qmod{g}_{2k}\cap \re[\xi]_d)+ Y^*,\\
			& L_{c_i}^{(t)}[w]\succeq 0,\,i = 1, \ldots, m, \\
			& M_t[w]\succeq 0,\\
			&  w_0 = 1, \,  w\in\re^{\N_{2t}^n} .\\
		\end{array}
		\right.
	\end{equation}
	Consider the Lagrangian function:
	\[\begin{array}{rcl}
		\mc{L}(w; \gamma, y, q) & \coloneqq & \langle f, w\rangle - \gamma(w_0-1) -
		\langle Hw, y\rangle -\langle w, q\rangle \\
		& =& \langle f,w\rangle -\gamma w_0-\langle H^Ty,w\rangle -\langle q, w\rangle + \gamma\\
		&=& \langle f - H^Ty - q - \gamma\cdot 1, w\rangle +\gamma,
	\end{array}\]
	where $\gamma\in\re$, $q\in \qmod{c}_{2k}$, and $y\in cone(Y)$
	is the $d$th order truncation of a tms in $\mathscr{S}[g]_{2k}$.
	In the above,
	$-\gamma w_0 = \langle-\gamma\cdot 1, w\rangle$ follows from the bilinear operation \reff{eq:bilinearf}
	for the constant scalar polynomial $-\gamma$, and
	the $f-H^Ty-q-\gamma$ denotes the polynomial
	\[
	f(x) - y^TH\cdot [x]_{2t} -q(x) - \gamma\in\re[x].
	\]
	To make $\mc{L}(w; \gamma, y, q)$ be bounded below for all $w\in\re^{\N_{2t}^n}$,
	the above polynomial must be identically zero, or equivalently,
	\[
	f(x) - y^TH\cdot [x]_{2t} - \gamma \,=\, q(x) \in\qmod{c}_{2t}.
	\]
	When $cone(Y)$ is closed, the dual problem of \reff{eq:mom_sos} is
	\begin{equation}\label{eq:polydual}
		\left\{
		\begin{array}{rl}
			\gamma_{k}\,\coloneqq\,\max\limits_{(\gamma, y, z)} & \gamma\\
			\st & f(x) - y^TH\cdot [x]_{2t} - \gamma\in\qmod{c}_{2t},\\
			& \gamma\in\re,\,y\in cone(Y),\\
			& y = z|_d,\, M_{k}[z]\succeq 0,\\
			& L_{g_i}^{(k)}[z]\succeq 0\,(i\in[m_1]),
		\end{array}
		\right.
	\end{equation}
	where $z|_d$ denotes the degree-$d$ truncation of $z$ as
	\[
	z|_d \, \coloneqq \,  (z_\af)_{  \af\in \N^p_d }.
	\]
	The primal-dual pair \reff{eq:mom_sos}-\reff{eq:polydual}
	is called the $k$th order Moment-SOS relaxation for \reff{eq:momrel1}.
	When $Y^*$ is a mixture of linear, second-order and semidefinite conic conditions,
	the pair \reff{eq:mom_sos}-\reff{eq:polydual}
	can be solved as a semidefinite program.
	
	Assume the equality \reff{eq:Kdual} holds.
	We give the following Moment-SOS algorithm for
	solving \reff{eq:momrel1}
	which is a relaxation of DRO \reff{md:ecDRO}.

	\begin{algorithm}
		\label{def:alg}
		For the DRO given as in \reff{md:ecDRO},
		let $t$ be as in \reff{eq:td},
		$d_0$ as in \reff{eq:d0}.
		Initialize $k \coloneqq d_0$, $k_1 \coloneqq  \lceil (d+1)/2 \rceil$
		and do the following:
		
		\begin{description}
			
			\item[Step 0]
			Formulate the cone $\cone{Y}$ and select a generic $R\in\Sig[\xi]_{2k_1}$.
			
			\item[Step 1]
			Solve the $k$th order Moment-SOS relaxations
			\reff{eq:mom_sos}--\reff{eq:polydual}
			for their optimizer $w^*$ and $(\gamma^*,y^*, z^*)$ respectively.

			\item[Step~2]
			Solve the moment optimization problem
			\begin{equation}\label{eq:TKMP}
				\left\{
				\begin{array}{cl}
					\min\limits_{\hat{z}} & \langle R, \hat{z}\rangle\\
					\st
					& L_{g_i}^{(k_1)}[\hat{z}]\succeq 0,\,i=1, \ldots, m_1, \\
					& M_{k_1}[\hat{z}]\succeq 0,\,  \hat{z}|_d = y^*,\\
					& \hat{z}\in\re^{\N_{2k_1}^p} .
				\end{array}
				\right.
			\end{equation}
			If \reff{eq:TKMP} is infeasible, update $k\coloneqq k+1$ and go back to Step~1.
			If \reff{eq:TKMP} is feasible, solve it for
			an optimizer $\hat{z}^*$ and go to Step~3.
			
			\item[Step~3]
			Check if there exists a degree $d_1\in [d_0,k_1]$ such that
			\be  \label{eq:flat}
			\rank\, M_{d_1}[\hat{z}^*] \,=\, \rank\, M_{d_1-d_2}[\hat{z}^*],
			\ee
			where $d_0$ is given as in \reff{eq:d0} and
			$d_2\coloneqq \lceil \deg(g)/2\rceil$.
			If it does exist, go to Step~4; otherwise,
			update $k_1 \coloneqq k_1+1$ and go back to Step~2.
			
			\item[Step 4]
			Output $x^* = \pi(w^*)$.
		\end{description}
	\end{algorithm}
	
	The following flowchart describes the process of
      Algorithm~\ref{def:alg}, each approximation step of \reff{md:ecDRO},
      and the corresponding theorems.
		\begin{figure}[!ht]
		\centering
		\resizebox{1\textwidth}{!}{%
			\begin{circuitikz}
				\tikzstyle{every node}=[font=\Large]
				\draw  (2.25,15) rectangle  node {\LARGE (1.1)} (4,14);
				\draw [<->, >=Stealth] (4,14.5) -- (7,14.5);
				\draw  (7,15) rectangle  node {\LARGE (3.6)} (8.75,14);
				\node [font=\normalsize] at (5.5,14.75) {Reformulation};
				\draw [->, >=Stealth] (8.75,14.5) -- (12,14.5);
				\draw  (12,15) rectangle  node {\LARGE (3.8)} (13.75,14);
				\node [font=\normalsize] at (5.5,14.25) {Condition (3.5)};
				\node [font=\normalsize] at (10.25,14.25) {Theorem 3.2};
				\node [font=\normalsize] at (10.25,14.75) {Tight Relaxation};
				\draw [->, >=Stealth] (13.75,14.5) -- (16.75,14.5);
				\draw  (16.75,15) rectangle  node {\Large (3.11)} (18.5,14);
				\node [font=\normalsize] at (15.25,14.75) {Tight Relaxation};
				\node [font=\normalsize] at (15.25,14.25) {Theorem 3.4};
				\node [font=\normalsize] at (12.25,15.75) {Moment relaxation with respect to $x$};
				\draw  (12,13) rectangle  node {\LARGE $(4.3)$} (13.75,12);
				\draw  (9.25,13) rectangle  node {\LARGE $(4.4)$} (11,12);
				\draw [short] (7.75,15) -- (7.75,16);
				\draw [short] (7.75,16) -- (17.75,16);
				\draw [->, >=Stealth] (17.75,16) -- (17.75,15);
				\node [font=\normalsize] at (11.5,12.75) {dual};
				\draw [short] (17.75,14) -- (17.75,12.5);
				\draw [->, >=Stealth] (17.75,12.5) -- (14.75,12.5);
				\node [font=\normalsize] at (16.25,12.75) {SOS approximation };
				\node [font=\normalsize] at (13.75,12.75) {$$};
				\node [font=\normalsize] at (16.25,12.25) {Theorem 4.2};
				\node [font=\normalsize] at (8.75,10.75) {\textbf{Algorithm 4.1}};
				\node [font=\normalsize] at (11.5,11.5) {\textbf{Step 1}};
				\draw  (6.25,13) rectangle  node {\LARGE (4.5)} (8,12);
				\draw  (3.25,13) rectangle  node {\LARGE (4.6)} (5,12);
				\draw [ dashed] (3,13.25) rectangle  (5.25,11.75);
				\draw [ dashed] (6,13.25) rectangle  (8.25,11.75);
				\draw [<->, >=Stealth] (11,12.5) -- (12,12.5);
				\draw [ dashed] (9,13.25) rectangle  (14,11.75);
				\draw [->, >=Stealth, dashed] (9,12.5) -- (8.25,12.5);
				\draw [->, >=Stealth, dashed] (6,12.5) -- (5.25,12.5);
				\node [font=\normalsize] at (7.25,11.5) {\textbf{Step 2}};
				\node [font=\normalsize] at (4,11.5) {\textbf{Step 3}};
				\draw  (2.25,13.5) -- (14.5,13.5) -- (15,11.25) -- (2.75,11.25) -- cycle;
			\end{circuitikz}
		}%
		\caption{A flowchart to represent Algorithm~\ref{def:alg} and each approximation step of \reff{md:ecDRO}}
		\label{fig:flowchart}
	\end{figure}

	We would like to make the remarks for Algorithm~\ref{def:alg}.
	\bit
	
	\item In Step 0, the set $\cone{Y}$ is often closed and given
	as a cone given by
	linear, second order or semidefinite constraints.
	We refer to \cite[Chapter~1]{NieBook}
	for how this can be done in computation.

	\item In Step~1, optimization problems \reff{eq:mom_sos}--\reff{eq:polydual}
	can be efficiently solved with the software
	\texttt{GloptiPoly3} \cite{GloptiPoly3},
	\texttt{YALMIP} \cite{LofbergYalmip}
	and \texttt{SeDuMi} \cite{JSturmSedumi}.
	Examples for using these software
	can be found in the book \cite{NieBook}.
	For a given relaxation order $k$,
	the dimension of variable $w$ in \reff{eq:mom_sos} is $\binom{n+2t}{2t}$,
	and the dimension of the variable triple $(\gamma,y,z)$ in \reff{eq:polydual} is
	$1+\binom{p+d}{d}+\binom{p+2k}{2k}$.
	Generally, one can assume \reff{eq:mom_sos}--\reff{eq:polydual} have optimizers.
	This can be guaranteed by strictly feasible points. For instance,
	this is the case if $\qmod{g}$ is archimedean, the set $X$
	has an interior point $\hat{x}$ and $h(\hat{x}, \xi)>0$ for all $\xi \in S$.
	We refer to \cite[Section~8.3]{NieBook}
	for more detailed theory about this.

	\item
	In Steps~2 and 3, the semidefinite program \reff{eq:TKMP}
	and the rank condition \reff{eq:flat}
	are used to check the membership $y^*\in\mathscr{R}_d(S)$.
	In particular, \reff{eq:flat} is called {\it flat truncation}
	(see \cite{JNieFlatTruncation,NieBook}).
	If \reff{eq:flat} holds, then $\hat{z}^*$ has a flat truncation
	and $y^*$ must admit a measure supported in $S$,
	i.e., $y^*\in\mathscr{R}_d(S)$.
	This method works very efficiently in computational practice.
	Under some genericity assumptions, the convergence of the loop
	of Steps~2 and 3 is shown in \cite{JNieAtruncated}.
	We refer to \cite{JNieAtruncated} and
	\cite[Section~8.2]{NieBook} for more details.

	\item
	In Step~3, if \reff{eq:flat} is satisfied, then there exist
	$r \coloneqq \rank M_{d_1}[\hat{z}^*]$
	distinct points $u_1,\dots, u_r\in S$ such that
	\[ y^* = \int[\xi]_d{\tt d}\mu\quad \mbox{with}\quad
	\mu = \theta_1\delta_{u_1}+\cdots+\theta_r\delta_{u_r}, \]
	where each $\theta_i>0$ and $\delta_{u_i}$
	stands for the unique Dirac measure supported at $u_i$.
	We refer to \cite[Section~2.7]{NieBook} for
	how to determine such measure $\mu$.

	\eit

	In Step~4, $x^* = \pi(w^*)$ is a global optimizer of \reff{md:ecDRO}
	if it is SOS-convex and \reff{eq:Kdual} holds.
	Interestingly, this is also true under some other conditions.
	More details are in Theorems~\ref{thm:SOSconvex} and \ref{thm:rank1}.
	The following theorem justifies Step~2.

	\begin{theorem}\label{thm:tightg}
		Assume \reff{eq:mom_sos}--\reff{eq:polydual} have no duality gap.
		Suppose $w^*$ is the optimizer of \reff{eq:mom_sos} and $(\gamma^*, y^*, z^*)$
		is the optimizer of \reff{eq:polydual} at some relaxation order $k$.
		If $y^*\in\mathscr{R}_d(S)$, then $w^*$ is also an optimizer of \reff{eq:momrel1}.
	\end{theorem}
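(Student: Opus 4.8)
The plan is to prove that the optimal values of \reff{eq:mom_sos} and \reff{eq:momrel1} coincide. Writing $f_k$ for the optimal value of \reff{eq:mom_sos} (attained at $w^*$) and $f^*$ for that of \reff{eq:momrel1}, it suffices to establish the two inequalities $f_k\ge f^*$ and $f_k\le f^*$. Once $f_k=f^*$ is known, the fact that $w^*$ is feasible for \reff{eq:momrel1} and attains $\langle f, w^*\rangle = f_k = f^*$ immediately yields that $w^*$ is an optimizer of \reff{eq:momrel1}, which is the conclusion.

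The inequality $f_k\ge f^*$ I would obtain by a direct feasibility argument. Since the relaxation order satisfies $2k\ge\deg(g)$, the truncated quadratic module obeys $\qmod{g}_{2k}\cap\re[\xi]_d\subseteq\mathscr{P}_d(S)$, so the robust-constraint cone $(\qmod{g}_{2k}\cap\re[\xi]_d)+Y^*$ of \reff{eq:mom_sos} is contained in the cone $\mathscr{P}_d(S)+Y^*$ of \reff{eq:momrel1}. As the constraints $L_{c_i}^{(t)}[w]\succeq 0$, $M_t[w]\succeq 0$, $w_0=1$ are identical in both problems, every feasible point of \reff{eq:mom_sos} is feasible for \reff{eq:momrel1}; in particular $w^*$ is, so $f^*\le\langle f, w^*\rangle = f_k$.

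The reverse inequality $f_k\le f^*$ is where the hypothesis $y^*\in\mathscr{R}_d(S)$ is used, and I would prove it by exhibiting $(\gamma^*, y^*)$ as a weak-duality certificate for \reff{eq:momrel1}. From dual feasibility in \reff{eq:polydual}, the polynomial $q\coloneqq f - y^{*T}H[\,\cdot\,]_{2t} - \gamma^*$ lies in $\qmod{c}_{2t}$, giving the identity $f(x)=\gamma^*+y^{*T}H[x]_{2t}+q(x)$. Pairing both sides with an arbitrary $w$ feasible for \reff{eq:momrel1} and using $w_0=1$ gives
\[
\langle f, w\rangle \,=\, \gamma^* + \langle Hw, y^*\rangle + \langle q, w\rangle .
\]
Here $\langle q, w\rangle\ge 0$, because $q\in\qmod{c}_{2t}$ and the matrix conditions on $w$ mean $w\in\mathscr{S}[c]_{2t}=(\qmod{c}_{2t})^*$ by \reff{eq:qmdual}. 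For $\langle Hw, y^*\rangle$, I would decompose $(Hw)^T[\xi]_d = p_1+p_2$ with $p_1\in\mathscr{P}_d(S)$ and $p_2\in Y^*$ from feasibility of $w$; then $y^*\in cone(Y)$ gives $\langle p_2, y^*\rangle\ge 0$, while the admissibility of a measure $\mu$ on $S$ for $y^*$ gives $\langle p_1, y^*\rangle=\int p_1\,{\tt d}\mu\ge 0$ since $p_1\ge 0$ on $S\supseteq\supp{\mu}$ (equivalently, $\mathscr{P}_d(S)=\mathscr{R}_d(S)^*$ in \reff{eq:PRdual}). Thus $\langle Hw, y^*\rangle\ge 0$ and $\langle f, w\rangle\ge\gamma^*$ for every feasible $w$, so $f^*\ge\gamma^*$; the no-duality-gap hypothesis gives $\gamma^*=f_k$, hence $f^*\ge f_k$.

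The step I expect to demand the most care is the bound $\langle Hw, y^*\rangle\ge 0$. Without the hypothesis $y^*\in\mathscr{R}_d(S)$, the dual optimizer satisfies only $y^*=z^*|_d$ with $z^*\in\mathscr{S}[g]_{2k}$, a cone that is in general strictly larger than $\mathscr{R}_d(S)$, so $\langle p_1, y^*\rangle$ could be negative for some $p_1\in\mathscr{P}_d(S)$ and the certificate would break down. The assumption that $y^*$ genuinely admits a representing measure supported on $S$ is exactly what restores the correct sign and turns $(\gamma^*, y^*)$ into a valid dual point for the unrelaxed cone $\mathscr{P}_d(S)+Y^*$, thereby closing the gap between $f_k$ and $f^*$.
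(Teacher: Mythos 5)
Your proof is correct and takes essentially the same route as the paper's: both sandwich the optimal value $f^*$ of \reff{eq:momrel1} between $f_k$ (because \reff{eq:mom_sos} is a restriction of \reff{eq:momrel1}) and $\gamma^*$ (because the hypothesis $y^*\in\mathscr{R}_d(S)$ makes $(\gamma^*,y^*)$ feasible for the dual of \reff{eq:momrel1}), and then close the chain $f^*\le f_k=\gamma^*\le f^*$ using the no-duality-gap assumption. The only difference is one of exposition: the paper cites weak duality between \reff{eq:momrel1} and its dual as a known fact, whereas you verify that inequality explicitly through the decomposition $(Hw)^T[\xi]_d=p_1+p_2$ with $p_1\in\mathscr{P}_d(S)$, $p_2\in Y^*$ and the pairing $\langle q,w\rangle\ge 0$ for $q\in\qmod{c}_{2t}$, which is simply the spelled-out content of the same weak-duality step.
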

	\begin{proof}
		Note $f^*$ is the optimal value of \reff{eq:momrel1}. Let $f_k$
		be the optimal value of \reff{eq:mom_sos} for the relaxation order $k$.
		Since \reff{eq:mom_sos} is a restriction of \reff{eq:momrel1} for every $k$, we have
		\[
		f^*\,\le\, f_k \,=\, \langle f, w^*\rangle.
		\]
		Note \reff{eq:polydual} is a relaxation for the dual problem of \reff{eq:momrel1}.
		Write the dual problem
		\[
		\left\{
		\begin{array}{cl}
			\max\limits_{(\gamma,y)} & \gamma\\
			\st & f(x) - y^TH\cdot [x]_{2t} - \gamma\in\qmod{c}_{2t},\\
			& y\in cone(Y)\cap \mathscr{R}_d(S),\\
			& \gamma\in \re.
		\end{array}
		\right.
		\]
		If $y^*\in \mathscr{R}_d(S)$,
		then $(\gamma^*, y^*)$ is the optimizer of the above problem.
		So we have $\gamma^*\le f^*$ by the weak duality between \reff{eq:momrel1} and its dual.
		Since \reff{eq:mom_sos}-\reff{eq:polydual} have no duality gap,
		\[
		f_k \,=\, \gamma^*\,\le \, f^*.
		\]
		This implies $f_k = f^*$, so $w^*$ is also an optimizer of \reff{eq:momrel1}.
	\end{proof}

	Next, we discuss when Algorithm~\ref{def:alg}
	returns a global optimizer for the DRO~\reff{md:ecDRO}.

	\subsection{The SOS-convex case}
	\label{ssc:cvx}
	
	We consider the case that
	\reff{md:ecDRO} is an SOS-convex optimization problem.

	\begin{theorem}\label{thm:SOSconvex}
		Assume \reff{eq:Kdual} holds and \reff{eq:mom_sos}--\reff{eq:polydual}
		have no duality gap.
		Suppose $f, -c_1,\ldots,-c_m$ are all SOS-convex and
		$\mathbb{E}_{\mu}[h(x,\xi)]$ is SOS-concave for every $\mu\in\mc{M}$.
		If Algorithm~\ref{def:alg} terminates at some loop of order $k$,
		then the output $x^*$ is a global optimizer of \reff{md:ecDRO}.
	\end{theorem}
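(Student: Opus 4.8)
The plan is to obtain the conclusion as a composition of the three relaxation results already proved (Theorems~\ref{thm:eqv1}, \ref{thm:sosproj}, and \ref{thm:tightg}), with the only genuinely new input being the certificate produced when Algorithm~\ref{def:alg} halts. The crucial first step is to translate termination into the hypothesis needed for Theorem~\ref{thm:tightg}. Reaching Step~4 of Algorithm~\ref{def:alg} means that at the terminating order $k$ the loop of Steps~2 and 3 has succeeded: the truncated moment problem \reff{eq:TKMP} is feasible with an optimizer $\hat{z}^*$, and the flat-truncation condition \reff{eq:flat} holds for some degree $d_1 \in [d_0, k_1]$. By the flat extension theory recalled in the remarks following the algorithm, a tms satisfying \reff{eq:flat} admits a finitely atomic representing measure supported in $S$; since $\hat{z}^*|_d = y^*$, the truncation $y^*$ inherits this measure and hence $y^* \in \mathscr{R}_d(S)$. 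This membership is precisely what was missing.

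With $y^* \in \mathscr{R}_d(S)$ in hand, I would invoke Theorem~\ref{thm:tightg}. Its hypotheses are all discharged: \reff{eq:mom_sos}--\reff{eq:polydual} have no duality gap by the standing assumption, and $w^*$ together with $(\gamma^*, y^*, z^*)$ are the optimizers returned in Step~1 at the terminating order $k$. Therefore Theorem~\ref{thm:tightg} yields that $w^*$ is an optimizer of the moment relaxation \reff{eq:momrel1}.

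It remains to descend from \reff{eq:momrel1} to the original DRO \reff{md:ecDRO}. Because $f, -c_1, \ldots, -c_m$ are SOS-convex, $\mathbb{E}_{\mu}[h(x,\xi)]$ is SOS-concave for every $\mu \in \mc{M}$, and \reff{eq:Kdual} holds, the last statement of Theorem~\ref{thm:sosproj} applies and makes \reff{eq:momrel1} a tight relaxation of \reff{md:ecDRO} in the sense of Theorem~\ref{thm:eqv1}; equivalently, one first uses the SOS-convexity to pass from the optimizer $w^*$ of \reff{eq:momrel1} to an optimizer $\pi(w^*)$ of \reff{eq:momrel} (Jensen's inequality \reff{ineq:Jensen} pushes $\langle f, w^*\rangle$ and the localizing constraints back to genuine feasibility and optimality of $\pi(w^*)$), and then the SOS-concavity of $\mathbb{E}_{\mu}[h]$ together with \reff{eq:Kdual} lets Theorem~\ref{thm:eqv1} identify $\pi(w^*)$ as a global optimizer of \reff{md:ecDRO}. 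Thus the output $x^* = \pi(w^*)$ is globally optimal. I expect the main obstacle to be the first step—deducing $y^* \in \mathscr{R}_d(S)$ rigorously from termination via flat extension—since everything afterward is a clean bookkeeping composition whose only delicacy is checking that each cited theorem's hypotheses are literally satisfied by the standing assumptions and by the optimizers the algorithm returns.
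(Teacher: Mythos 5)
Your proposal is correct and follows essentially the same route as the paper's proof: termination of Algorithm~\ref{def:alg} gives $y^*\in\mathscr{R}_d(S)$ via flat truncation, Theorem~\ref{thm:tightg} then makes $w^*$ an optimizer of \reff{eq:momrel1}, and Theorems~\ref{thm:sosproj} and \ref{thm:eqv1} carry $x^*=\pi(w^*)$ down to a global optimizer of \reff{md:ecDRO}. The only difference is that you spell out the flat-truncation step explicitly, which the paper leaves implicit in its remarks following the algorithm.
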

	\begin{proof}
		Suppose $w^*$ is the optimizer of \reff{eq:mom_sos}
		and Algorithm~\ref{def:alg} terminates at the loop of order $k$.
		By Theorem~\ref{thm:sosproj} and \ref{thm:tightg},
		$w^*$ is an optimizer of \reff{eq:momrel1} and
		$x^* = \pi(w^*)$ is an optimizer of \reff{eq:momrel}.
		Since \reff{eq:Kdual} holds and $\mathbb{E}_{\mu}[h(x,\xi)]$ is SOS-concave
		for all $\mu\in \mc{M}$, the optimization\reff{eq:momrel}
		is equivalent to \reff{md:ecDRO}, by Theorem~\ref{thm:eqv1}.
		So $x^*$ is also a global optimizer of \reff{md:ecDRO}.
	\end{proof}

	\subsection{The non-SOS-convex case}
	\label{ssc:notcvx}
	
	When the DRO \reff{md:ecDRO} is not SOS-convex,
	Algorithm~\ref{def:alg} may still find a global optimizer of \reff{md:ecDRO},
	under some other assumptions.

	\begin{theorem}\label{thm:rank1}
		Assume \reff{eq:Kdual} holds and \reff{eq:mom_sos}-\reff{eq:polydual}
		have no duality gap. Suppose Algorithm~\ref{def:alg} outputs $x^*=\pi(w^*)$
		at a loop of relaxation order $k$
		where $w^*$ is the optimizer of \reff{eq:mom_sos}.
		If \rank\,$M_t[w^*] = 1$,
		then $x^*$ is a global optimizer of \reff{md:ecDRO}.
	\end{theorem}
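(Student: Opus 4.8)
The plan is to use the rank-one hypothesis to collapse the moment relaxation \reff{eq:mom_sos} back into the original problem. First I would invoke the standard characterization of rank-one moment matrices: since $M_t[w^*]\succeq 0$ has rank one and $w^*_0=1$, the tms $w^*$ admits a single Dirac measure $\delta_{x^*}$, so that $w^* = [x^*]_{2t}$ with $x^* = \pi(w^*)$ (see \cite[Chapter~2]{NieBook}). This is the decisive step; once $w^*$ is known to be an honest monomial evaluation, every moment and localizing constraint degenerates to a pointwise polynomial inequality at $x^*$, and the remainder of the argument is bookkeeping.

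Next I would verify that $x^*$ is feasible for the reformulation \reff{eq:po_equiv}, which under hypothesis \reff{eq:Kdual} is equivalent to \reff{md:ecDRO}. The robust constraint is immediate: feasibility of $w^*$ for \reff{eq:mom_sos} gives $(Hw^*)^T[\xi]_d = (H[x^*]_{2t})^T[\xi]_d\in \mathscr{P}_d(S)+Y^*$. For membership $x^*\in X$, I would read the top-left entry of each localizing matrix $L_{c_i}^{(t)}[w^*]$: by the defining identity \reff{locmat:gi} with $b_1=b_2=1$, this entry equals $\langle c_i, w^*\rangle = c_i(x^*)$, which is nonnegative because $L_{c_i}^{(t)}[w^*]\succeq 0$. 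Hence $c_i(x^*)\ge 0$ for every $i\in[m]$, so $x^*\in X$.

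Then I would establish optimality. Because Algorithm~\ref{def:alg} reached its output in Step~4, it must have passed Step~3, so the flat truncation condition \reff{eq:flat} held for $\hat z^*$; as noted in the remarks, this certifies $y^* = \hat z^*|_d\in\mathscr{R}_d(S)$. Together with the no-duality-gap hypothesis, Theorem~\ref{thm:tightg} then shows that $w^*$ is an optimizer of \reff{eq:momrel1}, so its optimal value satisfies $f^* = \langle f, w^*\rangle = f(x^*)$, the last equality because $w^* = [x^*]_{2t}$. Since $f^*$ is a lower bound for the minimum $f_{min}$ of \reff{md:ecDRO}, while feasibility of $x^*$ gives $f(x^*)\ge f_{min}$, the chain $f^*\le f_{min}\le f(x^*) = f^*$ forces equality throughout, so $x^*$ attains $f_{min}$ and is a global optimizer.

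The main obstacle is the first step: making rigorous that $\rank M_t[w^*]=1$ with $w^*_0=1$ forces $w^*$ to be the full truncated monomial vector $[x^*]_{2t}$, rather than merely agreeing with it in low-order entries. This rests on the rank-one moment-matrix representation theorem, after which the downstream reasoning is routine. I would emphasize that no convexity of $f$ or of the $c_i$ is used anywhere: the rank-one structure substitutes for the Jensen-inequality device of the SOS-convex case (Theorems~\ref{thm:eqv1} and \ref{thm:sosproj}), which is precisely why this theorem applies to genuinely nonconvex instances.
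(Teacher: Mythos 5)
Your proposal is correct and follows essentially the same route as the paper's proof: the rank-one condition collapses $w^*$ to the monomial vector $[x^*]_{2t}$ (so that moment and localizing constraints become pointwise feasibility of $x^*$ and $\langle f,w^*\rangle = f(x^*)$), Theorem~\ref{thm:tightg} upgrades $w^*$ to an optimizer of \reff{eq:momrel1}, and the relaxation sandwich $f^*\le f_{min}\le f(x^*)=f^*$ gives global optimality. Your version merely reorders these steps and spells out details (the top-left localizing-matrix entry, the passage through \reff{eq:po_equiv}) that the paper leaves implicit.
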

	\begin{proof}
		By Theorem~\ref{thm:tightg}, $w^*$ is a global optimizer of \reff{eq:momrel1}.
		If $\rank M_{t}[w^*] = 1$,  then for $x^* = \pi(w^*)$,
		we must have
		\[
		M_{t}[w^*] = [x^*]_{t}[x^*]_{t}^T.
		\]
		See \cite[Section~5.3]{NieBook} for this fact.
		So,  $f(x^*) = \langle f,w^*\rangle$ and $x^*$ is feasible for \reff{md:ecDRO},
		since $c_i(x^*) = \langle c_i, w^*\rangle \ge 0$ for each $i$ and
		\[
		\mathbb{E}_{\mu}[h(x^*,\xi)] \,=\,
		\mathbb{E}_{\mu}[(Hw^*)^T[\xi]_d]\,\ge\, 0,
		\quad \forall \mu\in\mc{M}.
		\]
		Since \reff{eq:momrel1} is a relaxation of \reff{md:ecDRO},
		we know $x^*$ must also be a global optimizer of \reff{md:ecDRO}.
	\end{proof}

	Suppose $(\gamma^*, y^*, z^*)$ is the optimizer of \reff{eq:polydual}
	for some relaxation order $k$.
	In Algorithm~\ref{def:alg}, we can get the membership
	$y^*\in\mathscr{R}_d(S)$ if
	\[
	\exists d_1\in [d_0,k] \quad \mbox{such that}\quad
	\rank\,M_{d_1}[z^*] \,= \, 1.
	\]
	This is a special case for the flat truncation condition \reff{eq:flat}.
	If it is met, then $y^*$ must admit a Dirac measure supported at
	the point $\xi^* = (y_{e_1}^*,\ldots, y_{e_p}^*)$.
	Therefore, we get the following corollary.
	
	\begin{cor}\label{coro:rank1}
		Assume \reff{eq:Kdual} holds and \reff{eq:mom_sos}--\reff{eq:polydual}
		have no duality gap.
		Suppose $w^*$ is an optimizer of \reff{eq:mom_sos} and
		$(\gamma^*, y^*,z^*)$ is an optimizer of \reff{eq:polydual}
		for some relaxation order $k$.
		If $\rank\,M_t[w^*] = 1$ and $\rank\, M_{d_1}[z^*] = 1$
		for some degree $d_1\in [d_0,k]$,
		then $\gamma^*$ is the optimal value and $x^* = \pi(w^*)$
		is the optimizer of \reff{md:ecDRO}.
	\end{cor}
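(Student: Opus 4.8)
The plan is to read the corollary as the natural conjunction of the two rank certificates with Theorems~\ref{thm:tightg} and~\ref{thm:rank1}: the rank-one condition on $z^*$ supplies the moment membership required by Theorem~\ref{thm:tightg}, while the rank-one condition on $w^*$ triggers the point-extraction argument of Theorem~\ref{thm:rank1}.

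First I would show that $\rank M_{d_1}[z^*] = 1$ forces $y^* \in \mathscr{R}_d(S)$. Since $d_1 \ge d_0 \ge d_2 \coloneqq \lceil \deg(g)/2\rceil$ and $M_{d_1}[z^*] \succeq 0$, a rank-one positive semidefinite moment matrix has nonzero leading entry (otherwise the whole sequence vanishes) and restricts to a rank-one principal submatrix, so
\[
\rank M_{d_1}[z^*] \,=\, \rank M_{d_1-d_2}[z^*] \,=\, 1 ,
\]
which is exactly the flat truncation condition \reff{eq:flat}. By the truncated moment theory (cf.\ \cite[Section~2.7]{NieBook}), $z^*$ then admits a representing measure that is a single atom $\delta_{\xi^*}$ at $\xi^* = (z^*_{e_1},\ldots,z^*_{e_p})$, and the localizing constraints $L_{g_i}^{(k)}[z^*]\succeq 0$ force $g_i(\xi^*)\ge 0$, so $\xi^* \in S$. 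Truncating to degree $d$ gives $y^* = z^*|_d \in \mathscr{R}_d(S)$.

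With $y^* \in \mathscr{R}_d(S)$ established, Theorem~\ref{thm:tightg} applies directly: $w^*$ is an optimizer of \reff{eq:momrel1}, and the no-duality-gap hypothesis together with the weak-duality estimate used in its proof yields $\gamma^* = f_k = f^*$, where $f^*$ is the optimal value of \reff{eq:momrel1}. I would then exploit the second certificate as in Theorem~\ref{thm:rank1}: because $\rank M_t[w^*] = 1$, the moment matrix factors as $M_t[w^*] = [x^*]_t[x^*]_t^T$ with $x^* = \pi(w^*)$ (cf.\ \cite[Section~5.3]{NieBook}), so that $\langle f, w^*\rangle = f(x^*)$, $\langle c_i, w^*\rangle = c_i(x^*)\ge 0$, and $(Hw^*)^T[\xi]_d = h(x^*,\xi)$. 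Hence $x^*$ is feasible for \reff{md:ecDRO}. Since \reff{eq:momrel1} is a relaxation of \reff{md:ecDRO}, feasibility combined with $f(x^*) = \langle f,w^*\rangle = f^*$ forces $x^*$ to be a global optimizer and $\gamma^* = f^* = f_{min}$, as claimed.

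The only nonroutine ingredient is the first step, namely that the rank-one condition genuinely certifies $y^* \in \mathscr{R}_d(S)$ with its atom located inside $S$; once this is granted, the corollary is a mechanical chaining of Theorems~\ref{thm:tightg} and~\ref{thm:rank1}. I expect no real obstacle here, since this step is covered entirely by standard flat-truncation results, and the remaining work is bookkeeping that tracks the equality $\gamma^* = f_k = f^* = f(x^*) = f_{min}$ through the two theorems.
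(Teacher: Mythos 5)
Your proposal is correct and takes essentially the same route as the paper: the paper's own proof is a one-line citation of Theorem~\ref{thm:rank1} together with the discussion preceding the corollary, which asserts precisely your first step --- that $\rank\, M_{d_1}[z^*]=1$ is a special case of the flat truncation condition \reff{eq:flat} and hence certifies $y^*\in\mathscr{R}_d(S)$ --- and the rest of your argument is exactly the unfolding of Theorems~\ref{thm:tightg} and \ref{thm:rank1} that the paper intends. The one imprecision is your parenthetical claim that a rank-one psd moment matrix must have nonzero leading entry ``otherwise the whole sequence vanishes'': this is false as stated, since for the univariate tms $z=(0,0,1)$ the matrix $M_1[z]$ is psd of rank one while $z_0=0$ and $z\neq 0$, and for such degenerate sequences flat truncation (and existence of a representing measure) genuinely fails. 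However, the paper itself silently assumes the same nondegeneracy when it declares the rank-one condition to be a special case of \reff{eq:flat}, so this edge case does not separate your argument from the paper's.
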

	\begin{proof}
		The conclusion follows from Theorem~\ref{thm:rank1}
		and the above discussion.
	\end{proof}
	
	We conclude this section with an example.
	
	\begin{example}\label{ex:bi_xi_nolin}
		Consider the DRO problem
		\begin{equation}\label{eq:bi_xi_nolin}
			\left\{
			\begin{array}{cl}
				\min\limits_{x\in\re^2} &  x_1^2 + 2x_1x_2 + x_2\\
				\st & \inf\limits_{\mu\in\mc{M}}\,\mathbb{E}_{\mu}
				[x_1x_2-x_1\xi_1^2-x_2^2\xi_2^2]\ge 0,\\
				& 1-x_1^2-x_2^2\ge 0,
			\end{array}
			\right.
		\end{equation}
		where $\xi = (\xi_1,\xi_2)$, $S = [0,1]^2$ and,
		\[
		Y = \big\{ y\in\re^{\N_2^2}
		: y_{00} = 1, y_{10}+y_{20}\le 1, y_{02}+y_{02}\le 2
		\big\}.
		\]
		The objective function of \reff{eq:bi_xi_nolin} is not convex.
		For $t = 1$ and $k=1$, we solve \reff{eq:mom_sos}--\reff{eq:polydual}
		and get the optimizers
		\[
		w^* = \big(1, -\frac{1}{6}, -\frac{1}{6}, \frac{1}{36}, \frac{1}{36}, \frac{1}{36}\big),
		\quad  y^* = (1, 0, 1, 0, 0, 1).
		\]
		Both $M_1[w^*]$ and $M_1[y^*]$ are rank one.
		By Corollary~\ref{coro:rank1},
		the DRO problem \reff{eq:bi_xi_nolin} has a global optimizer
		\[
		x^* = \pi(w^*) = \frac{1}{6}(-1, -1),
		\]
		and its global minimum is $ f_{min} = f(x^*) = -1/12$.
	\end{example}

	\subsection{A heuristic trick}
	
	We consider the case that the DRO \reff{md:ecDRO} is not solved as in Theorems~\ref{thm:SOSconvex} and \ref{thm:rank1}.
	Suppose $w^*$ and $(\gamma^*,y^*,z^*)$ are computed from Algorithm~\ref{def:alg}
	for some relaxation order $k$.
	Observe that if $y^*\in\mathscr{R}_d(S)$
	and $w^* = [x^*]_{2t}$, then
	\[ h(x^*)^Ty^* \,=\, (Hw^*)^Ty^* \,=\,
	\inf\limits_{\mu\in\mc{M}} \mathbb{E}_{\mu} \big[ h(x^*)^T[\xi]_d \big] \ge 0.
	\]
	When the rank-one condition in Theorem~\ref{thm:rank1} fails,
	the projection $x^* = \pi(w^*)$ may or may not be an optimizer.
	It may even not be feasible for the original DRO.
	To get a robust candidate solution, a heuristic trick is
	to use $h(x)^Ty^* \ge 0$ to approximate the robust constraint in \reff{md:ecDRO}.
	Therefore, we consider the deterministic polynomial optimization problem
	\begin{equation}\label{eq:heurisPO}
		\left\{\begin{array}{cl}
			\min\limits_{x\in X} & f(x)\\
			\st & h(x)^Ty^*\ge 0.
		\end{array}
		\right.
	\end{equation}
	One can solve it by Moment-SOS relaxations.
		Under some genericity assumptions, the Moment-SOS hierarchy 
        either returns an optimizer of \reff{eq:heurisPO} or detects its infeasibility.
		This is a well-studied topic in polynomial optimization.
		We refer to \cite[Chapter 5]{NieBook} for more detailed introductions.
	Suppose $\hat{x} \in X$ is the optimizer of \reff{eq:heurisPO}.
	It is feasible for \reff{md:ecDRO} if and only if
	the robust constraint is satisfied.
	This can be verified by solving the linear conic optimization
	\begin{equation}\label{eq:feasveri}
		\left\{\begin{array}{cl}
			\min\limits_{(y,z)} & h( \hat{x} )^Ty\\
			\st & y\in cone(Y),\\
			& L_{g_i}^{(k)}[z]\succeq 0,\,i = 1, \ldots, m_1, \\
			& y=z|_d,\, M_k[z]\succeq 0,\\
			& z \in \re^{ \N^n_{2k} }.
		\end{array}
		\right.
	\end{equation}
	for relaxation orders $k\ge d_0$. Let $\eta_k$ denotes the optimal value of \reff{eq:feasveri}.
	Since each $y\in K$ can be extended to a feasible pair $(y,z)$ of \reff{eq:feasveri},
	\[
	\eta_k \,\le\, \inf_{y\in K} \, h( \hat{x} )^Ty \,=\,
	\inf_{\mu\in\mc{M}}\mathbb{E}_{\mu}[h( \hat{x},\xi)].
	\]
	If $\eta_k\ge 0$ for some $k\ge d_0$, then $\hat{x}$ is feasible for the DRO \reff{md:ecDRO}.
	It is a global optimizer if we, in addition, have $f(\hat{x}) = f^*$.
	This is because we must have $f(\hat{x}) = f_{min}$, since $f^*$
	is the optimal value of \reff{eq:momrel1}, which is a lower bound of
	the minimum value $f_{min}$ of \reff{md:ecDRO}.
	If $\eta_k<0$ is negative for all $k$,
	we do not know how to solve the DRO \reff{md:ecDRO}.
	This question is mostly open, to the best of the authors's knowledge.

	\begin{example}
		Consider the DRO problem
		\[
		\left\{
		\begin{array}{cl}
			\min\limits_{x\in\re^2} &  x_1x_2 - x_2^2\\
			\st & \inf\limits_{\mu\in\mc{M}}\,\mathbb{E}_{\mu}
			[x_1x_2+x_1\xi_1^2-x_2^2\xi_2^2]\ge 0,\\
			& 1-x_1^2+x_2^2\ge 0,
		\end{array}
		\right.
		\]
		where $\xi = (\xi_1,\xi_2)$, $S = [0,1]^2$ and $Y$ is as in Example~\ref{ex:bi_xi_nolin}.
		The objective and feasible set of this DRO are both nonconvex. For $t = 1$ and $k=1$,
		we solve \reff{eq:mom_sos}--\reff{eq:polydual}
		and get the optimizers:
		\[\begin{array}{r}
			w^* = (1.0000, 0.7167, 0.4523, 1.4200, 0.9370, 0.9370),\\
			y^* = (1.0000, 0.0000, 1.0000, 0.0000, 0.0000, 1.0000) .
		\end{array}\]
		Since $\rank\, M_1[y^*] = 1$, we know $y^*\in\mathscr{R}_2(S)$ and hence
		$f^* = \langle f, w^*\rangle = 0.0000$ by Theorem~\ref{thm:tightg}.
		However, $\rank\, M_1[w^*]>1$, so Theorem~\ref{thm:rank1} does not imply $x^* = \pi(w^*)$ is a global minimizer.
		Then, we solve \reff{eq:heurisPO} and get the optimizer $\hat{x} = (0.0000, 0.0000)$.
		Solving \reff{eq:feasveri}, one can check that $\hat{x}$ is feasible for this DRO.
		Since $f(\hat{x}) = f^*$, we know $\hat{x}$ is a global optimizer and the minimum value $f_{min} = 0.0000$.
	\end{example}

	\section{Numerical Experiments}
	\label{sec:numerical}

	In this section, we present numerical experiments for applying
	Algorithm \ref{def:alg} to solve DRO with nonlinear robust constraints.
	The computation is implemented in MATLAB R2018a,
	in a Laptop with CPU 8th Generation Intel\textregistered Core\texttrademark i5-8250U and RAM 16 GB.
	The software \texttt{GloptiPoly3} (version 3.10) \cite{GloptiPoly3},
	\texttt{YALMIP} (version R20210331) \cite{LofbergYalmip}
	and \texttt{SeDuMi} (version 1.3) \cite{JSturmSedumi} are used for
	implementing Algorithm~\ref{def:alg}.
	For neatness of presentation, we only display four decimal digits
	for computational results.
	
	\begin{example}\label{ex:sosconvex}
		Consider the DRO problem
		\[
		\left\{\begin{array}{cl}
			\min\limits_{x\in\re^2} &2x_1-x_2+(x_1-x_2)^2\\
			\st &  \inf\limits_{\mu\in\mc{M}} \mathbb{E}_{\mu}[x_1\xi_1^2-x_2\xi_2^2-x_1^2\xi_1^3-x_2^2\xi_2^3]\ge 0,\\
			& x_1-x_2\ge 0,\quad 1-x_1^2-x_2^2\ge 0,
		\end{array}
		\right.
		\]
		where $\xi= (\xi_1,\xi_2)$ and $\mc{M}$ is given with
		\[
		S = \{\xi = (\xi_1,\xi_2): \xi_1\ge 0,\, \xi_2-\xi_1\ge 0,\, 1-\xi_1-\xi_2\ge 0\},
		\]
		\[
		Y = \left\{y\in\re^{\N_3^2}\left|\begin{array}{c}
			y_{00} = 1,\,  y_{00}\le 2y_{10}+2y_{01}\\
			y_{10}+y_{01}\le 2y_{20}+2y_{02}\\
			y_{20}+y_{02}\le 2y_{30}+2y_{03}
		\end{array}\right.\right\}.
		\]
		Since $Y$ is given by linear constraints, we can explicitly write
		\[
		cone(Y) = \Big\{y\in\re^{\N_3^2}: 0\le \frac{y_{00}}{8}\le \frac{y_{10}+y_{01}}{4}\le
		\frac{y_{20}+y_{02}}{2}\le y_{30}+y_{03} \Big\}.
		\]
		The objective function is SOS-convex, and each $c_i(x)$ is SOS-concave.
		Compute the Hessian matrix
		\[
		-\nabla_x^2 h(x,\xi) = 2\left[\begin{array}{cc}
			\xi_1^3 &  0\\
			0 & \xi_2^3
		\end{array}\right].
		\]
		Since $S\subseteq \re_+^2$, $-\mathbb{E}_{\mu}[\nabla_x^2 h(x,\xi)]$ is a constant psd matrix,
		thus $\mathbb{E}_{\mu}[h(x,\xi)]$ is SOS-concave for all $\mu\in \mc{M}$.
		It took around $0.58$ second by Algorithm~\ref{def:alg}.
		For $t = 1, k=2$, we solve the dual pair \reff{eq:mom_sos}--\reff{eq:polydual} and get the optimizers
		\[\begin{array}{r}
			w^* = ( 1.0000, -0.2450, -0.3291, 0.0600, 0.0806, 0.1083),\\
			y^* = (9.2405, 3.4833, 3.4833, 1.7416, 1.7416, 1.7416, 0.8708,\\
			0.8708,0.8708, 0.8708).
		\end{array}\]
		For $k_1 = 2$, we solve the trunacted moment problem \reff{eq:TKMP} and the flat truncation
		\reff{eq:flat} is satisfied. It implies $y^*\in\mathscr{R}_3(S)$ and we obtain the decomposition
		\[
		y^* = 2.2740\bbm 0.0000\\0.0000\ebm_3 +
		6.9665\bbm 0.5000\\0.5000\ebm_3.
		\]
		Clearly, \reff{eq:Kdual} holds. By Theorem~\ref{thm:SOSconvex},
		this DRO problem has a global optimizer
		\[
		x^* = \pi(w^*)= (-0.2450, -0.3291),
		\]
		and its global minimum is $ f_{min} = f(x^*)  = -0.1537$.
	\end{example}
	
	\begin{example}\label{ex:sim_nonSOS}
		Consider the DRO problem
		\[
		\left\{\begin{array}{cl}
			\min\limits_{x\in\re^3} & x_1^3 + (x_2-x_1-x_3)^2 + x_3^3\\
			\st & \inf\limits_{\mu\in\mc{M}} \mathbb{E}_{\mu}[h(x,\xi)]\ge 0,\\
			& 1\le \|x\|^2 \le 4,\, x_3-x_1-x_2\ge 0,
		\end{array}
		\right.
		\]
		where $\xi = (\xi_1,\xi_2)$, $S = [-1,1]^2$ and
		\[
		h(x,\xi) = x_3\xi_1^4 +x_1x_3\xi_2^4 + (x_2-x_1-1)\xi_1^2\xi_2^2,
		\]
		\[
		Y = \Big\{
		y\in\re^{\N_4^2}:
		y_{00} = 1,\, \, y_{30}\ge 2y_{03},\,\sum\limits_{|\alpha|\ge 1} y_{\alpha}^2= 5
		\Big\}.\]
		The objective function is not convex,
		and $h(x,\xi)$ is not concave in $x$.
		In addition, the moment set $Y$ is not convex.
		Since $y_{00} = 1$ and $\sum_{|\alpha|\ge 1}y_{\alpha}^2 = 5$, we have
		\[ \|y\|_2^2 = y_{00}^2+\sum_{|\alpha|\ge1 , \alpha\in\N_4^2} y_{\alpha}^2 = 1+5 = 6. \]
		The convex hull of $\{y: \|y\|_2 = \sqrt{6}\}$ is determined by $\|y\|_2\le \sqrt{6}$.
		Then we have an explicit expression of the conic hull
		\[
		cone(Y) = \big\{
		y\in\re^{\N_4^2}:
		y_{30}\ge 2y_{03},\,
		\|y\|_2\le \sqrt{6}\, y_{00}
		\big\}.
		\]
		It took around $1.11$ second by Algorithm~\ref{def:alg}.
		For $t = 2$ and $k = 2$, we solve \reff{eq:mom_sos}--\reff{eq:polydual}
		and get the optimizers
		\[
		\begin{array}{r}
			w^* = (1.0000, -1.9078, -0.6004, 0.0000, 3.6395, 1.1454, 0.0000, 0.3605, 0.0000,\\
			0.0000, -6.9433, -2.1852, 0.0000, -0.6877, 0.0000, 0.0000, -0.2164,\\
			0.0000, 0.0000, 0.0000, 13.2460, 4.1688, 0.0000, 1.3120, 0.0000, 0.0000,\\
			0.4129, 0.0000, 0.0000, 0.0000, 0.1300, 0.0000, 0.0000, 0.0000, 0.0000),\\
			y^* = (9.6684, 0.4256, -1.1012, 2.6152, 0.0000, 3.1062, 0.3944, 0.0001, 0.0000,\\
			-1.0308, 1.8761, 0.0000, 0.0000, 0.0000, 2.3538).
		\end{array}
		\]
		For $k_1 = 3$, we solve the truncated moment problem \reff{eq:TKMP} and the flat truncation
		\reff{eq:flat} is satisfied. It implies $y^*\in\mathscr{R}_4(S)$ and we obtain the decomposition
		\[
		y^* = 1.8437\left[\begin{array}{r}-0.8693\\0.0383\end{array}\right]_4 +
		2.9456\left[\begin{array}{r} 0.0308\\-0.9100\end{array}\right]_4 +
		\]
		\[
		\quad 2.1548\left[\begin{array}{r}-0.0274\\0.7513\end{array}\right]_4 +
		2.3413\left[\begin{array}{r}0.8578\\-0.0092\end{array}\right]_4.
		\]
		The $Y$ and $S$ are both compact, so \reff{eq:Kdual} holds.
		Since $\rank\, M_2[w^*] = 1$, by Theorem~\ref{thm:rank1}, this DRO has a global optimizer
		\[
		x^*=\pi(w^*) = (-1.9078, -0.6004, 0.0000),
		\]
		and its global minimum is $f_{min} = f(x^*) = -5.2341$.
	\end{example}
	
	\begin{example}\label{ex:com:nonSOS}
		Consider the DRO problem
		\[
		\left\{
		\begin{array}{cl}
			\min\limits_{x\in\re^4} & x_1(x_2-x_4)+x_2(x_1+x_3)\\
			\st & \inf\limits_{\mu\in\mc{M}}\mathbb{E}_{\mu}[h(x,\xi)]\ge 0,\\
			& 1-\|x\|^2 \ge 0,\, x\ge 0,\\
			& x_3+x_4 - x_1^4-x_2^4 \ge 0,
		\end{array}
		\right.
		\]
		where $\xi = (\xi_1,\xi_2)$ and $h(x,\xi)$ is given as
		\[
		h(x,\xi) = x_3(\xi_1^4+\xi_2^4) - (x_4+x_1x_4)\xi_1^2\xi_2^2
		+ x_1x_2\xi_1^2 + x_1^2\xi_2^2 - x_2x_4\xi_1\xi_2.
		\]
		In the above, assume $\mc{M}$ is given by
		\[
		S = \{\xi\in\re^2: 1-\|\xi\|^2\ge 0\},
		\]
		\[
		Y = \left\{
		y\in\re^{\N_4^2}\left|\begin{array}{c}
			y_{00} = 1,
			\bbm y_{20} & y_{11}\\y_{11} & y_{02}\ebm\preceq \frac{1}{2}I_2,\,
			\bbm y_{40} & y_{31} & y_{22}\\
			y_{31} & y_{22} & y_{13}\\
			y_{22} & y_{13} & y_{04}\ebm\preceq \frac{1}{4}I_3
		\end{array}
		\right.
		\right\}.\]
		The $cone(Y)$ can be explicitly expressed as
		\[
		cone(Y) = \left\{
		y\in\re^{\N_4^2}\left|\begin{array}{c}
			y_{00}\ge 0,
			\bbm y_{20} & y_{11}\\y_{11} & y_{02}\ebm\preceq \frac{y_{00}}{2}I_2,\,
			\bbm y_{40} & y_{31} & y_{22}\\
			y_{31} & y_{22} & y_{13}\\
			y_{22} & y_{13} & y_{04}\ebm\preceq \frac{y_{00}}{4}I_3
		\end{array}
		\right.
		\right\}.
		\]
		The objective function is not convex, and $h(x,\xi)$ is not concave in $x$.
		It took around $2.68$ seconds by Algorithm~\ref{def:alg}.
		For $t = 2$ and $k = 2$, we solve \reff{eq:mom_sos}--\reff{eq:polydual}
		and get the optimizers
		\[
		\begin{array}{r}
			w^* = (1.0000, 0.7391, 0.0000, 0.1333, 0.6602, 0.5463, 0.0000, 0.0985, 0.4880,\\
			0.0000, 0.0000, 0.0000, 0.0178, 0.0880, 0.4359, 0.4038, 0.0000, 0.0728,\\
			0.3607, 0.0000, 0.0000, 0.0000, 0.0131, 0.0650, 0.3222, 0.0000, 0.0000,\\
			0.0000, 0.0000, 0.0000, 0.0000, 0.0024, 0.0117, 0.0581, 0.2878, 0.2984,\\
			0.0000, 0.0538, 0.2666, 0.0000, 0.0000, 0.0000, 0.0097, 0.0481, 0.2381,\\
			0.0000, 0.0000, 0.0000, 0.0000, 0.0000, 0.0000, 0.0018, 0.0087, 0.0429,\\
			0.2127, 0.0000, 0.0000, 0.0000, 0.0000, 0.0000, 0.0000, 0.0000, 0.0000,\\
			0.0000, 0.0000, 0.0003, 0.0016, 0.0077, 0.0384, 0.1900),\\
			y^* = (2.3070, 0.0000, 0.0000, 0.1594, 0.0049, 0.0705, 0.0000, 0.0000, 0.0000,\\
			0.0000, 0.1104, 0.0034, 0.0489, 0.0015, 0.0217).
		\end{array}
		\]
		For $k_1 = 3$, we solve the truncated moment problem \reff{eq:TKMP} and the flat truncation \reff{eq:flat}
		is satisfied. It implies $y^*\in\mathscr{R}_4(S)$ and we obtain the decomposition
		\[
		y^* = 0.0602\left[\begin{array}{r}-0.8324\\-0.5540\end{array}\right]_4 +
		0.0548\left[\begin{array}{r}-0.8324\\0.5540\end{array}\right]_4 +
		2.0770\bbm 0.0000\\0.0000\ebm_4
		\]
		\[\quad
		+0.0549\left[\begin{array}{r}0.8324\\-0.5539\end{array}\right]_4 +
		0.0601\left[\begin{array}{r}0.8325\\0.5539\end{array}\right]_4.
		\]
		The \reff{eq:Kdual} is clearly satisfied.
		Since $\rank\, M_2[w^*] = 1$, by Theorem~\ref{thm:rank1}, this DRO has a global optimizer
		\[
		x^* = \pi(w^*) = ( 0.7391, 0.0000, 0.1333, 0.6602),
		\]
		and its global minimum is $f_{min} = f(x^*) = -0.4880$.
	\end{example}

	Then we give two numerical examples with applications in portfolio optimization.
	In the first example, we compare the performances between linear portfolio selection model and the mean-variance model.
	In the second example, we explore the impact of higher-order moments
	in the mean-variance model.
	In both examples, we construct the ambiguity set using the method proposed in \cite{NieDro2023},
	where the moment constraining set
	\begin{equation}\label{eq:numY}
		Y\, =\, \{y\in\re^{\N_d^n}: l\le y\le u\}
	\end{equation}
	for scalar vectors $l,u$ computed from random samples of $\xi$.
	
	\begin{example}
		Consider a portfolio comprising $n$ assets,
		of which the random returns are denoted by $\xi = (\xi_1,\ldots, \xi_n)^T$.
		Let $x = (x_1,\ldots,x_n)$ denote the vector of investing weights;
		i.e.,
		\[
		x\in \, \Delta \,\coloneqq\, \{x\in\re^n: x\ge 0,\,e^Tx = 1\}.
		\]
		In portfolio optimization, the goal is to seek optimal investment allocations tailored to
		different risk appetites. In this example, we study the linear portfolio selection model \cite{NieDro2023} and
		the mean-variance (M-V) model \cite{YYZportfolio}, and compare the numerical results
		based on the same sampling set.
		
		Suppose $n = 3$, $\xi = (\xi_1,\xi_2,\xi_3)$, $S = [0,1]^3$
		and each $\xi_i$ is independently distributed.
		Assume $\xi_1$ follows the uniform distribution,
		$\xi_2$ follows the truncated standard normal distribution,
		and $\xi_3$ follows the truncated exponential distribution with mean value $0.5$.
		We use {\tt MATLAB} to randomly generate $150$ samples of $\xi$. The sample average is
		\begin{equation}\label{eq:samavg}
			\nu\, = \, (0.5132, 0.4598, 0.4356)^T.
		\end{equation}
		Using the method introduced in \cite{NieDro2023},
		we construct $Y = \{y\in\re^{\N_2^3}: l\le y \le u\}$ with
		\[\begin{array}{r}
			l = (1.0000, 0.4849, 0.3942, 0.3880, 0.3258, 0.1922, 0.1970, 0.2164, 0.1640, 0.2190),\\
			u = (1.0000, 0.5414, 0.5254, 0.4833, 0.3679, 0.2544, 0.2422, 0.3674, 0.2271, 0.3216).
		\end{array}\]
		The moment ambiguity set is
		\begin{equation}\label{eq:portfolioM}
			\mc{M} = \{\mu: \supp{\mu}\subseteq [0,1]^3, \mathbb{E}_{\mu}([\xi]_2)\in Y\}.
		\end{equation}
		Since  $l_1 = u_1 = 1.0000$ (here $l_1,u_1$ denotes the first entry of $l,u$ respectively),
		all measures in $\mc{M}$ are probability measures.

		\noindent
		(i) First, we study the linear portfolio selection model
		\begin{equation}\label{eq:linear_portfolio}
			\min\limits_{x\in \Delta} \max\limits_{\mu\in\mc{M}}\,
			\mathbb{E}_{\mu}[-x^T\xi].
		\end{equation}
		It can be equivalently reformulated as
		\begin{equation}\label{eq:ref_lin_port}
			\left\{\begin{array}{cl}
				\min\limits_{(x_0,\bar{x})} & x_0\\
				\st & \inf\limits_{\mu\in\mc{M}}\mathbb{E}_{\mu}[x_0+x^T\xi]\ge 0,\\
				& x = (\bar{x}, 1-e^T\bar{x})\in \Delta,\, x_0\in\re, \, \bar{x}\in\re^2,
			\end{array}\right.
		\end{equation}
		where $e = (1,1)^T$. The \reff{eq:ref_lin_port} is SOS-convex with affine objective function and constraints.
		It took us around $1.03$ second by Algorithm~\ref{def:alg}.
		For $t=1$ and $k=1$, we solve the dual pair \reff{eq:mom_sos}--\reff{eq:polydual} and get the optimizers
		\[\begin{array}{r}
			w^* = ( 1.0000, -0.4849, 1.0000, 0.0000, 0.5071, -0.4668, 0.0000, 1.1392, 0.0083, 0.3393),\\
			y^* = ( 1.0000, 0.4849, 0.4372, 0.4214, 0.3477, 0.2238, 0.2197, 0.2995, 0.1956, 0.2754).
		\end{array}\]
		For $k_1 = 2$, we solve the truncated moment problem \reff{eq:TKMP} and the flat truncation \reff{eq:flat}
		is satisfied. It implies $y^*\in\mathscr{R}_2(S)$ and we obtain the decomposition
		\[
		y^* = 0.1590\bbm0.0000\\0.8169\\0.1077\ebm_2 +
		0.1641\bbm 0.0000\\0.0000\\0.6200\ebm_2 +
		0.0845\bbm0.6603\\0.0000\\ 0.0000 \ebm_2 +
		\]
		\[
		+ 0.1556\bbm 0.7213\\0.3366\\0.0000\ebm_2
		+ 0.0670\bbm 0.7023\\0.0000\\0.5843\ebm_2
		+ 0.3698\bbm 0.7298\\0.6893\\0.7123
		\ebm_2.
		\]
		Clearly, \reff{eq:Kdual} holds. By Theorem~\ref{thm:SOSconvex}, the DRO \reff{eq:ref_lin_port}
		has a global optimizer
		\[
		(x_0^*,\bar{x}^*) = \pi(w^*) =  (-0.4849,\, 1.0000,\, 0.0000).
		\]
		Hence \reff{eq:linear_portfolio} has a global optimizer
		\[
		x^* = (\bar{x}^*, 1-e^T\bar{x}^*) = (1.0000,\, 0.0000,\, 0.0000).
		\]
		This is a high risk investment strategy,
		as it involves allocating all resources into a single asset.

		\noindent
		(ii) Then, we consider the M-V model
		\begin{equation}\label{eq:M-V}
			\min\limits_{x\in\Delta}\max\limits_{\mu\in \mc{M}}\,
			\mathbb{E}_{\mu}\big[-x^T\nu
			+ (x^T\xi-x^T\nu)^2\big].
		\end{equation}
		It can be equivalently reformulated as
		\begin{equation}\label{eq:M-V:dro}
			\left\{\begin{array}{cl}
				\min\limits_{(x_0,\bar{x})} & x_0\\
				\st & \inf\limits_{\mu\in\mc{M}} \mathbb{E}_{\mu}\big[x_0+x^T\nu
				- (x^T\xi-x^T\nu)^2\big] \ge 0,\\
				& x = (\bar{x}, 1-e^T\bar{x}) \in\Delta,\,x_0\in\re,\,\bar{x}\in\re^2,
			\end{array}
			\right.
		\end{equation}
		where $e = (1,1)^T$.
		Clearly, this is an SOS-convex DRO.
		It took around $1.50$ second by Algorithm~\ref{def:alg}.
		For $t = 1$ and $k = 1$, we solve the dual pair \reff{eq:mom_sos}--\reff{eq:polydual} and get the optimizers
		\[\begin{array}{r}
			w^* = (1.0000, -0.3907,  0.7277,  0.1326,  0.4039, -0.2843, -0.0518,\\
			0.5296,  0.0965,  0.0176),\\
			y^* =  (1.0000, 0.4849, 0.3942, 0.3880, 0.3679,  0.2544,  0.2422,\\  0.3674,  0.2271,  0.3216).
		\end{array}\]
		For $k_1 = 2$, we solve the truncated moment problem \reff{eq:TKMP} and the flat truncation \reff{eq:flat}
		is satisfied. It implies $y^*\in\mathscr{R}_2(S)$ and we obtain the decomposition
		\[
		y^* = 0.2619\bbm 0.0000\\0.0000\\0.0000\ebm_2 +
		0.0238\bbm 0.0000\\0.0000\\0.8199\ebm_2 +
		0.1447\bbm0.5586\\0.0000\\0.8091 \ebm_2 +
		0.1457\bbm 0.8831\\0.0000\\0.0000\ebm_2
		\]
		\[
		+ 0.0555\bbm 0.0000\\1.0000\\0.0000\ebm_2
		+ 0.2997\bbm 0.7034\\0.9035\\0.8388\ebm_2
		+ 0.0686\bbm 0.9419\\0.9903\\0.0000\ebm_2.
		\]
		Clearly, \reff{eq:Kdual} holds.
		By Theorem~\ref{thm:SOSconvex}, the DRO \reff{eq:M-V:dro} has a global optimizer
		\[
		(x_0^*,\bar{x}^*) = \pi(w^*) =  (-0.3907,\, 0.7277,\, 0.1326).
		\]
		Hence \reff{eq:M-V} has a global optimizer
		\[
		x^* = (\bar{x}^*, 1-e^T\bar{x}^*) = (0.7277,\, 0.1326,\, 0.1397).
		\]
		Compared to the optimizer from the linear model,
		this is a more robust investment strategy.
	\end{example}

	Then we explore the impact of higher-order moments in the M-V model.
	\begin{example}\label{ex:portfolio_large}
		Consider the M-V model as in \reff{eq:M-V} with $n = 10$.
		Assume each $\xi_i$ is independently distributed and follows the truncated normal distribution
		\[ \xi_i \sim\mc{N}_T(0.05 i, 0.03 i, -1,1),\quad i = 1,\ldots,10. \]
		Here $\mc{N}_T(\mu,\sigma, a,b)$ stands for the normal distribution with the mean $\mu$,
		variance $\sigma$ and is truncated within $[a,b]$. For each pair $(M,d)$ such that
		\[
		M\in \{40, 400, 4000\},\quad d\in\{1,2,3\},
		\]
		we made $10$ independent numerical simulations.
		In each simulation, we randomly generate $M$ samples of $\xi$, and use $75\%$ of samples
		to construct moment ambiguity set (using the method proposed in \cite{NieDro2023}) of the form
		\[ \mc{M} = \{\mu: \supp{\mu}\subseteq [-1,1]^{10},\, \mathbb{E}_{\mu}([\xi]_d)\in Y\}. \]
		The rest $25\%$ of samples are used to evaluate out-of-sample performance.
		We ran Algorithm~\ref{def:alg} at the initial relaxation order.
		Since \reff{eq:M-V:dro} is SOS-convex, the computed $x^* = \pi(w^*)$ is always feasible for the DRO.
		For a sample set $\mc{S} = \{\xi^{(1)},\ldots, \xi^{(M_1)}\}$ of $\xi$,
		we evaluate the performance of a candidate solution $x$ over $\mc{S}$ by
		\[
		J(x) = \frac{1}{M_1}\sum\limits_{i=1}^{M_1}\big[-x^T\hat{\nu} + (x^T\xi^{(i)}-x^T\hat{\nu})^2\big],
		\quad\mbox{where}\quad \hat{\nu} = \frac{1}{M_1}\sum\limits_{i=1}^{M_1} \xi^{(i)}.
		\]
		The numerical results are reported in Table~\ref{tab:portfolio_large}.
		The``avg. $J_{\tt in}$'' is used to denote ``average in-sample performance'',
		and ``avg. $J_{\tt out}$'' is used to denote ``average out-of-sample performance''.
		The ``avg. time'' stands for ``average CPU time'', which is counted by seconds.
		\begin{table}[htb!]
			\centering
			\caption{Numerical results for Example~\ref{ex:portfolio_large}}
			\label{tab:portfolio_large}
			\begin{tabular}{c| cc| cc| cc| c}
				\toprule
				& \multicolumn{2}{c}{$M = 40$} & \multicolumn{2}{c}{$M = 400$} & \multicolumn{2}{c|}{$M = 4000$} &
				avg. time\\
				\cmidrule(lr){2-3} \cmidrule(lr){4-5} \cmidrule(lr){6-7}
				& avg. $J_{\tt in}$ & avg. $J_{\tt out}$ & avg. $J_{\tt in}$ & avg. $J_{\tt out}$
				& avg. $J_{\tt in}$ & avg. $J_{\tt out}$ & (sec.)\\
				\midrule
				$d=1$ & $1.7873$ & $1.7647$ & $1.7963$ & $1.8126$ & $1.7933$ & $1.7997$ & $0.45$\\
				$d=2$ & $1.5330$ & $1.5220$ & $1.5381$ & $1.5357$ & $1.5325$ & $1.5348$ & $1.02$\\
				$d=3$ & $1.5387$ & $1.5159$ & $1.5164$ & $1.5191$ & $1.5080$ & $1.5098$ & $7.45$\\
				\bottomrule
			\end{tabular}
		\end{table}
		Since avg. $J_{\tt in}$ (resp. avg. $J_{\tt out}$) is an upper bound for
		the optimal value of this M-V model, smaller avg. $J_{\tt in}$  (resp. avg. $J_{\tt out}$) implies better performance.
		From the data in Table~\ref{tab:portfolio_large}, when $d$ is fixed,
		both in-sample and out-of-sample performances are slightly improved with the increase of the
		sample size $M$.
		On the other hand, for a fixed $M$,
		in-sample and out-of-sample performances are efficiently improved with the increase of moment order $d$.
		In particular, we obtained much better in-sample and out-of-sample performances with a small sample size $M=40$ for $d=2$,
		compared to these based on a much bigger sample size $M=4000$ for $d=1$.
		It implies that hihger-order moments has a big impact on the out-of-sample performance and sample efficiency for this M-V model.
	\end{example}

	\section{Conclusions and discussions}
	\label{sec:con}
	
	This paper studies polynomial DRO problems with polynomial robust constraints.
	By introducing new moment variables, we relax the DRO
	to linear conic optimization problems.
	A Moment-SOS algorithm is proposed to solve the relaxed optimization.
	When the original DRO is SOS-convex,
	we show that the relaxed optimization
	can give a global optimizer of the DRO.
	For nonconvex cases, the relaxed optimization
	may also returns a global optimizer for the DRO.
	Some numerical experiments are also given
	to show the efficiency of the proposed method.
	
	When the polynomials $-\mathbb{E}_{\mu}[h(x,\xi)]$, $f(x)$ and $-c_i(x)$ are convex but not SOS-convex,
		solving the DRO problem is still difficult, even in the absence of uncertainty.
		The difficulty arises from the semidefinite representability.
		The reason is that if $X = \{x\in\re^n: c_i(x)\ge 0,i=1,\ldots, m\}$ 
        is given by concave but not SOS-concave polynomials $c_i(x)$,
		it may be very difficult to represent $X$ by semidefinite programming.
		We refer to \cite[Chapter 7]{NieBook} for this topic.
		For the case that all $c_i(x)$ are concave,
		the NP-hardness of finding a feasible point in $X$ is still unknown, to the best of the authors' knowledge.

	Solving nonconvex DRO problems like \reff{md:ecDRO}
	is usually highly challenging,
	especially when the robust constraint in nonconvex in
	the decision variable $x$.
	Our first relaxation problem~\reff{eq:momrel}
	has linear constraints about $x$,
	while its objective $f(x)$ is still a polynomial function.
	We refer to \cite{NieDro2023} for how to solve
	this kind of optimization problems.
	When the robust constraint in \reff{md:ecDRO}
	are nonlinear nonconvex in $x$,
	it is still mostly an open question for how to relax
	\reff{md:ecDRO} to an equivalent linear conic optimization problem.
	This is an interesting question for future work.
	
	\medskip
        \noindent{\bf Acknowledgement.}
        Jiawang Nie is partially supported by the NSF grant DMS-2110780.


\begin{thebibliography}{99}
		
		
		\bibitem{AAAKlerk19}
		A.~A.~Ahmadi, E.~De~Klerk and G.~Hall,
		Polynomial norms,
		\emph{SIAM J. Optim.}, 29(1), 399--422, 2019.
		
		\bibitem{AAA13}
		A.~A.~Ahmadi, A.~Olshevsky, P.~A.~Parrilo and J.~N.~Tsitsiklis,
		NP-hardness of deciding convexity of quartic polynomials and related problems,
		\emph{Math. Program.}, 137, 453--476, 2013.
		
		
		\bibitem{AAAPar13}
		A.A.~Ahmadi and P.~Parrilo,
		A complete characterization of the gap between convexity and sos-convexity,
		\emph{SIAM J. Optim.}, 23(2), 811--833, 2013.
		
	
			\bibitem{BenNem12}
			A.~Ben-Tal and A.~Nemirovski,
			\emph{Lectures on modern convex optimization},
			SIAM, Philadelphia, PA, 2012.
				
		
		\bibitem{DBAdaptiveDRO}
		D.~Bertsimas, M.~Sim, and M.~Zhang,
		Adaptive distributionally robust optimization,
		\emph{Manag. Sci.}, 65(2), 604--618, 2019.
		
		
		\bibitem{twostageDRO}
		Y.~Chen, H.~Sun, and H.~Xu,
		Decomposition and discrete approximation methods for
		solving two-stage distributionally robust optimization problems,
		\emph{Comput. Optim. Appl.},
		78(1), 205--238, 2021.
		
		\bibitem{ChuToh21}
		H.~T.M.~Chu, K-C.~Toh and Y.~Zhang,
		On regularized square-root regression problems:
		distributionally robust interpretation and fast computations,
		\emph{Preprint}, 2021. \url{arXiv:2109.03632}
		
		
		\bibitem{DelageYeDRO}
		E.~Delage and Y.~Ye,
		Distributionally robust optimization under moment uncertainty
		with application to data-driven problems,
		\emph{Oper. Res.},
		58(3): 595--612, 2010.
		
		
		\bibitem{Garcia2022}
		C.~A.~Garc\'{i}a Trillos and N.~Garc\'{i}a Trillos,
		On the regularized risk of distributionally robust learning over deep neural networks,
		\emph{Res. Math. Sci.},
		9(3), Paper No. 54, pp 32, 2022.
		
		\bibitem{GS10}
		J.~Goh and M.~Sim,
		Distributionally robust optimization and its tractable approximations,
		\emph{Oper. Res.}, 58(4), 902--917, 2010.
		
		\bibitem{GurRusZhu20}
		M.~G\"{u}rb\"{u}zbalaban, A.~Ruszczy\'{n}ski, and L.~Zhu,
		A stochastic subgradient method for distributionally robust non-convex learning,
		\emph{J. Optim. Theory Appl.}, 194(3), 1014--1041, 2022.
		
		
		\bibitem{GloptiPoly3}
		D.~Henrion, J.~Lasserre and J.~Loefberg,
		GloptiPoly3: moments, optimization and semidefinite programming,
		\emph{Optim. Methods Softw.},
		24(4-5), 761--779, 2009.
		
		
		
		\bibitem{MomentSOShierarchy}
		D.~Henrion, M.~Korda, and J.~Lasserre,
		\emph{The Moment-SOS Hierarchy},
		World Scientific, Singapore, 2020.
		
		
		
		\bibitem{HilNie08}
		C.~Hillar and J.~Nie,
		An elementary and constructive proof of Hilbert's 17th Problem for matrices,
		{\em Proceedings of the AMS}, 136, 73--76, 2008.
		
		
		\bibitem{KlerkDROpolydense}
		E.~de.~ Klerk, D.~Kuhn, and K.~Postek,
		Distributionally robust optimization with polynomial densities:
		theory, models and algorithms,
		\emph{Math. Program.},
		181, 265--296, 2020.
		
		
		\bibitem{Las09}
		{J.~B.~Lasserre},
		{Convexity in semi-algebraic geometry and polynomial optimization},
		\emph{SIAM J. Optim.}, 19, 1995--2014, 2009.
		
		
		
		\bibitem{LasBk15}
		J.~B.~Lasserre,
		{\em Introduction to Polynomial and Semi-algebraic Optimization},
		Cambridge University Press, Cambridge, 2015.
		
		
		
		
		\bibitem{LasserreMixAmbiguity}
		J.~B.~Lasserre and T.~Weisser,
		Distributionally robust polynomial chance-constraints under mixture ambiguity sets,
		\emph{Math. Program.},
		185(1), 409--453, 2021.
		
		
		\bibitem{LaurentSOSmom2009}
		M.~Laurent,
		Sums of squares, moment matrices and optimization over polynomials,
		\emph{Emerging Applications of Algebraic Geometry
			of IMA Volumes in Mathematics and its Applications}
		149, pp. 157--270, Springer, 2009.
		
		
		\bibitem{LinFang22}
		F.~Lin and X.~Fang,
		Distributionally robust optimization: a review on theory and applications,
		\emph{Numer. Algebra Control Optim.}, 12(1), 159--212, 2022.
		
		
		\bibitem{LofbergYalmip}
		J.~Lofberg,
		YALMIP: A toolbox for modeling and optimization in MATLAB,
		\emph{IEEE international conference on robotics and automation}
		(IEEE Cat. No. 04CH37508), IEEE, 2004.
		
		\bibitem{DataDrivenPolyDRO}
		M.~Mevissen, E.~Ragnoli, and J.~Yu,
		Data-driven distributionally robust polynomial optimization,
		\emph{Advances in NIPS},
		26, 37--45, 2013.
		
		\bibitem{Milz}
		J.~Milz and M.~Ulbrich,
		An approximation scheme for distributionally robust nonlinear optimization,
		\emph{SIAM J. Optim.}, 30(3), 1996--2025, 2020.
		
		
		
		
		\bibitem{NiePMI}
		J.~Nie,
		Polynomial matrix inequality and semidefinite representation,
		{\em Math. Oper. Res.}, 36(3), 398--415, 2011.
		
		
		
		\bibitem{NieSOSbd}
		J.~Nie,
		Sum of squares methods for minimizing polynomial forms over spheres and hypersurfaces,
		{\em Front. Math. China},
		7, 321--346, 2012.
		
		
		
		\bibitem{JNieFlatTruncation}
		J.~Nie,
		Certifying convergence of Lasserre's hierarchy via flat truncation,
		\emph{Math. Program.},
		142(1-2), 485--510, 2013.
		
		\bibitem{JNieAtruncated}
		J.~Nie,
		The $\mathcal{A}$-Truncated $K$-Moment Problem,
		\emph{Found. Comput. Math.},
		14(6), 1243--1276, 2014.
		
		
		
		\bibitem{njwSTNN17}
		J.~Nie,
		Symmetric tensor nuclear norms,
		{\em SIAM J. Appl. Algebra Geometry}
		1(1), 599--625, 2017.
		
		
		\bibitem{NieZhang18}
		J.~Nie and X.~Zhang,
		Real eigenvalues of nonsymmetric tensors,
		{\em Comp. Opt. and Appl.}
		70(1), 1--32, 2018.
		
		
		\bibitem{Tight18}
		J.~Nie,
		Tight relaxations for polynomial optimization
		and lagrange multiplier expressions,
		\emph{Math. Program.} 178 (1-2), 1--37, 2019.
		
		
		
		\bibitem{NieBook}
		J.~Nie,
		{\em Moment and Polynomial Optimization},
		SIAM, 2023.
		
		
		
		
		\bibitem{NieSPO}
		J.~Nie, L.~Yang and S.~Zhong,
		Stochastic polynomial optimization,
		\emph{Optim. Methods Softw.},
		35(2), 329--347, 2020.
		
		\bibitem{NieDro2023}
		J.~Nie, L.~Yang, S.~Zhong and G.~Zhou,
		Distributionally robust optimization with moment ambiguity sets,
		\emph{J. Sci. Comput.},
		94(12), 1--27, 2023.
		
		
		
		
		\bibitem{PutinarPositive}
		M.~Putinar,
		Positive polynomials on compact semi-algebraic sets,
		\emph{Indiana Univ. Math. J.},
		42(3), 969--984, 1993.
		
		
		
		\bibitem{StochasticOpt}
		A.~Shapiro, D.~Dentcheva, and A.~Ruszczy\'{n}ski,
		\emph{Lectures on Stochastic Programming: Modeling and Theory},
		SIAM, 2014.
		
		
		
		\bibitem{JSturmSedumi}
		J.~Sturm,
		Using SeDuMi 1.02, a MATLAB toolbox for optimization over symmetric cones,
		\emph{Optim. Methods Softw.},
		11(1-4), 625--653, 1999.
		
		
		
		\bibitem{DRSVI}
		H.~Sun, A.~Shapiro, and X.~Chen,
		Distributionally robust stochastic variational inequalities,
		\emph{Math. Program.}, 200(1), 279--317, 2023.
		
		
		
		\bibitem{XG22}
		L.~Xin and D.~A.~Goldberg,
		Distributionally robust inventory control when demand is a martingale,
		\emph{Math. Oper. Res.}, 47(3) 2387--2414, 2022.
		
		\bibitem{MomentDROXu}
		H.~Xu, Y.~Liu, and H.~Sun,
		Distributionally robust optimization with matrix moment constraints:
		Lagrange duality and cutting plane methods,
		\emph{Math. Program.},
		169(2), 489--529, 2018.
		
		\bibitem{YYDROnetwork}
		Y.~Yang and W.~Wu,
		A Distributionally Robust Optimization Model for Real-Time Power Dispatch
		in Distribution Networks,
		\emph{IEEE Transactions on Smart Grid}
		10(4), 3743--3752, 2018.
		
		\bibitem{YYZportfolio}
		L.~Yang, Y.~Yang and S.~Zhong,
		Global optimization for the portfolio selection model with high-order moments,
		\emph{J. Oper. Res. Soc. China}, 2023. \url{doi.org/10.1007/s40305-023-00519-8}
		
		
		\bibitem{JZhangDRO}
		J.~Zhang, H.~Xu and L.~Zhang,
		Quantitative stability analysis for distributionally robust optimization
		with moment constraints,
		\emph{SIAM J. Optim.},
		26(3), 1855--1862, 2016.
		
		
		
		
		\bibitem{SZCVaRRobustPortfolio}
		S.~Zhu and M.~Fukushima,
		Worst-case conditional value-at-risk with application to robust portfolio management,
		\emph{Oper. Res.},
		57(5), 1155--1168, 2009.
		
		
	\end{thebibliography}
\end{document}